	\newcommand{\nfrac}{\nicefrac}
	\newcommand{\sfrac}{\nfrac}
	\newcommand{\Cref}{\zcref[S]}
\newcommand{\abs}[1]{\left\vert#1\right\vert}
\newcommand{\norm}[1]{\left\|#1\right\|}  
\newcommand{\set}[1]{\left\{ #1 \right\}}
\newcommand{\brak}[1]{\left\langle #1 \right\rangle}
\newcommand{\vv}{\brak{v}}
\newcommand{\vvd}{\brak{\cdot}}
\newcommand{\be}{\begin{equation}}
\newcommand{\ee}{\end{equation}}
\newcommand{\dd}{\, {\rm d}}
\newcommand{\1}{\mathbbm{1}}
\newcommand{\eps}{\varepsilon}
\renewcommand{\epsilon}{\eps}
\newcommand{\R}{\ensuremath{{\mathbb R}}}
\renewcommand{\S}{\ensuremath{{\mathcal S}}}
\DeclareMathOperator{\Id}{Id}
\DeclareMathOperator{\Tr}{Tr}
\DeclareMathOperator{\QL}{Q_{\rm L}}
\newcounter{num} \numberwithin{num}{section}
\newtheorem{theorem}[num]{Theorem}
\newtheorem{heuristic}[num]{Heuristic Result}
\newtheorem{proposition}[num]{Proposition}
\newtheorem{lemma}[num]{Lemma}
\newtheorem{corollary}[num]{Corollary}
\theoremstyle{definition}
\theoremstyle{remark}
\newtheorem{remark}[num]{Remark}
\numberwithin{equation}{section}
\newenvironment{NB}{%
    \par\vspace{\baselineskip}\noindent 
    \textbf{N.B.} \ignorespaces 
    \begin{itshape}
}{%
    \end{itshape}
    \par\vspace{\baselineskip}
}
\author{William Golding and Christopher Henderson}
\title[Hydrodynamic implosions and the Landau equation]{On Hydrodynamic Implosions and the Landau-Coulomb Equation}
\address[William Golding]
    {Department of Mathematics, The University of Chicago,
    Chicago, IL 60615, USA}
    \email{wgolding@uchicago.edu}
\address[Christopher Henderson]
    {Department of Mathematics, The University of Maryland, 
    College Park, MD 20742, USA}
    \email{ckhend@umd.edu}
\thanks{\textbf{Acknowledgments:} The authors would like to thank Luis Silvestre for comments on an early draft and his invaluable insight about the applicability of the continuation criterion in conjunction with the hydrodynamic limit. CH would like to thank Dave Levermore for many enlightening discussions, as well as his excellent slides on the history of kinetic equations and fluid dynamics.  CH was supported by NSF grants DMS-2337666 and DMS-2204615.}
\begin{document}

\begin{abstract}
We study the inhomogeneous Landau equation with Coulomb potential and derive a new continuation criterion: a smooth solution can be uniquely continued for as long as it remains bounded. This provides, to our knowledge, the first continuation criterion based on a quantity not controlling the mass density. Consequently, we are able to rule out a potential singularity formation scenario known as \emph{tail fattening}, in which an implosion occurs due to the loss of decay at large $v$.

More generally, we are able to rule out all Type II approximately self-similar blow-up rates that are slower than the Type I blow-up rate, without any assumption of decay on the inner profile, complementing existing Type I blow-up analysis in the literature. Heuristically, this suggests that it should be impossible to directly use the hydrodynamic limit connection with the 3D compressible Euler equations to construct a singular solution to the Landau equation with Coulomb potential. Such a potential implosion scenario---based on either an isentropic or nonisentropic implosion for the 3D Euler equations---would naturally result in a slow Type II approximately self-similar blow-up scenario, falling well within the range our theorem.

This preprint has been subsumed by a more recent work by the authors and Luis Silvestre titled ``Pointwise bounds and obstructions to blowup for the Landau and Boltzmann equations,'' arXiv:2605.20426. This manuscript will remain a permanent preprint; all references should be directed to the more recent work.
\end{abstract}

\maketitle

\section{Introduction}

The Landau equation is an integro-differential equation and widely accepted fundamental model for the evolution of collisional plasmas. As a kinetic equation, the Landau equation describes the time evolution of an unknown density function $f(t,x,v)$ defined on phase space, which provides a statistical description of the media (in this case, an ionized gas, i.e. plasma), resulting in an equation of the form:
\begin{equation}\label{eq:landau}
    (\partial_t + v \cdot \nabla_x) f = Q_L(f) \qquad \text{for } (t,x,v) \in \R^+ \times \Omega \times \R^3,
\end{equation}
where $t \in \R^+$, is time, $x \in \Omega \subset \R^3$ is position, $v \in \R^3$ is velocity, and $Q_L(f)$ is the Landau collision operator, modeling microscopic particle interactions. Hereafter, the spatial domain $\Omega$ denotes either $\R^3$ or $\mathbb{T}^3$, the three dimensional torus. 
Additionally, the operator $Q_L$ is a second order, degenerate elliptic operator with nonlocal coefficients. It can be written several ways, depending on whether one wishes to emphasize collisional, divergence, or non-divergence structure. These are, respectively:
\be\label{eq:QL}
\begin{split}
    \QL(f)
        &
        := \frac{1}{8 \pi} \nabla_v \cdot\int \frac{\Pi(v-v_*)}{|v-v_*|} \left( f_* \nabla_v f - f \nabla_{v_*} f_* \right) \dd v_*
        \\[5pt]
        &
        = \nabla_v \cdot \Big( A[f] \nabla_v f - \nabla_v a[f] f\Big)
        \\[5pt]
        &
        = \Tr(A[f] D^2_v f) + f^2.
\end{split}
\ee
Here we use the standard notation of $f_*= f(v_*)$, $f = f(v)$, and
\be
    \Pi(w):= \Id - \frac{w\otimes w}{|w|^2}
\ee
for all $w\in\R^3$.  Additionally, we have implicitly introduced the notation
\be\label{eq:defn_coefficients}
    A[f] = \frac{\Pi}{8 \pi |\cdot|} * f
    \quad\text{ and }\quad
    a[f] = \frac{1}{4\pi |\cdot|} * f.
\ee
It is common to modify the collisional form of \eqref{eq:QL} or equivalently \eqref{eq:defn_coefficients}, introducing a family of models known as the Landau equation with a power law potential. The particular form in \eqref{eq:QL} and \eqref{eq:defn_coefficients} results in the Landau equation with Coulomb potential, which is by far the most physically relevant model. Interestingly, our results seem to hold only in the case of Coulomb potential.

One major long-standing problem in kinetic theory is determining whether \eqref{eq:landau} admits global-in-time smooth solutions for arbitrary initial data. We note that, at present, despite concerted effort, a conclusive answer to this question appears out of reach in either direction.

Instead, in the positive direction, most research has centered on constructing global very weak solutions (see \cite{Villani}), constructing global strong solutions in a perturbative regime (see \cite{Guo,CarrapatosoMischler,KimGuoHwang}), constructing local-in-time solutions (see \cite{HendersonSnelsonTarfulea,HendersonSnelsonTarfulea1,HeYang,Chaturvedi,SnelsonTaylor}), and identifying minimal constraints that prevent singularity formation, namely conditional regularity. This manuscript falls into the latter category and we summarize these results more thoroughly. Conditional regularity results typically make some form of the following assumption:
\begin{equation}\label{eq:Hydrodynamic_Assumption}\tag{Assumption H}
\begin{aligned}
    m_0 \le &\int_{\R^3} f(t,x,v) \dd v &\le M_0,\\
    &\int_{\R^3} \abs{v}^2 f(t,x,v) \dd v &\le E_0,\\
    &\int_{\R^3} f(t,x,v)\log f(t,x,v) \dd v &\le H_0,
\end{aligned}
\end{equation}
interpreted as pointwise bounds on the macroscopic or hydrodynamic quantities associated to $f$. Under \eqref{eq:Hydrodynamic_Assumption}, hypoelliptic Schauder estimates were employed to bound higher order regularity of $f$ in terms of a $C^{0,\alpha}$ norm (adapted to kinetic scaling) in \cite{HendersonSnelson}. Again under \eqref{eq:Hydrodynamic_Assumption}, De Giorgi type estimates yield control of the $C^{0,\alpha}$-norm by the $L^\infty$-norm (see \cite{GolseImbertMouhotVasseur}). Subsequent work by~\cite{HendersonSnelsonTarfulea1} showed that, in fact, the lower mass bound $m_0$ and the entropy upper bound $H_0$ in~\eqref{eq:Hydrodynamic_Assumption} can be dropped from \eqref{eq:Hydrodynamic_Assumption}. In summary, the problem of singularity formation is reduced to upper bounds on the mass density $M_0$, upper bounds on the energy density $E_0$, and a pointwise bound on $f$.

Further work has focused on weakening the extra assumption of a pointwise bound on $f$ as much as possible. In the so-called moderately soft potentials case---where the kernel in~\eqref{eq:defn_coefficients} is less singular---this extra assumption can be dropped entirely (see \cite{CameronSilvestreSnelson}). In the case of Coulomb potential considered here, the strongest continuation result at present can be found in \cite{SnelsonSolomon}, where the pointwise bound is replaced by an $L^p$ bound for any $p > \sfrac32$.

Heuristically, the aim of the program outlined above is to show that any potential singularity formation in the kinetic model must occur in the hydrodynamic quantities \eqref{eq:Hydrodynamic_Assumption} and, thus, should already be visible at the hydrodynamic limit; that is, in the Euler equations (see below). In the case of moderately soft potentials---where this approach is successful---no new form of blow-up is expected at the kinetic level. At present, it remains unclear whether any new, by which we mean fundamentally kinetic, blow-up should be expected for the Landau equation with Coulomb potential. Our results suggest that, surprisingly, the most likely blow-up mechanism for \eqref{eq:landau} is kinetic rather than hydrodynamic.

\subsection{Main results}

The first main contribution of the present manuscript is a continuation result in the spirit of the aforementioned works. 
\begin{theorem}\label{thm:main}
    Suppose that $f_{\rm in} \in \S(\Omega \times \R^3)$ take $f\in C^\infty([0,T_*);\S(\Omega \times \R^3))$ is the unique Schwartz class solution $f$ to the Landau equation~\eqref{eq:landau} with initial data $f_{\rm in}$ on its maximal interval $[0,T_*)$ of existence. Then,
    \begin{equation*}
            \int_0^{T_*} \norm{f(t)}_{L^\infty_{x,v}} \dd t = \infty.
    \end{equation*}
\end{theorem}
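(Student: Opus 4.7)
The plan is to prove the contrapositive: assume $T_* < \infty$ and $I := \int_0^{T_*} \norm{f(t)}_{L^\infty_{x,v}} \dt < \infty$, then extend $f$ smoothly past $T_*$, contradicting maximality. First, I propagate $L^p_{x,v}$ norms for all $p \in [1,\infty)$. Working from either formulation of \eqref{eq:QL}, I test \eqref{eq:landau} against $p f^{p-1}$ and integrate over $\Omega \times \R^3$: the transport term vanishes, the collision term produces a nonpositive dissipation after integration by parts, and the remaining source contribution—via the Coulomb identity $\Delta a[f] = -f$—is $(p-1)\int f^{p+1} \le (p-1)\norm{f}_{L^\infty_{x,v}} \int f^p$. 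Gr\"onwall then yields
\[
    \norm{f(t)}_{L^p_{x,v}} \le \norm{f_{\rm in}}_{L^p_{x,v}} \exp\!\left(\tfrac{p-1}{p}\int_0^t \norm{f(s)}_{L^\infty_{x,v}} \ds\right) \le \norm{f_{\rm in}}_{L^p_{x,v}} e^I,
\]
uniformly bounded on $[0,T_*)$ since $f_{\rm in}$ is Schwartz.

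Second, I propagate velocity moments $N_k(t) := \int f \brak{v}^k \, \dx \dv$ for all $k \in \N$ by testing \eqref{eq:landau} against $\brak{v}^k$. The transport integrates to zero, and standard manipulations of $Q_L$—using the exact $|v-v_*|^{-1}$ singularity of $A[f]$ and $\Delta a[f] = -f$—lead to an inequality of the form $\ddt N_k \le C_k \norm{f}_{L^\infty_{x,v}} N_{k-1} + (\text{lower})$, yielding uniform propagation of all moments on $[0,T_*)$ by induction and Gr\"onwall.

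Third, with all $L^p$ and moment bounds in hand, I upgrade to a pointwise Gaussian upper envelope by comparing $f$ with a supersolution $G(t,v) = M(t) e^{-\lambda(t)|v|^2}$ to the non-divergence form. The key input is a pointwise bound on $A[f]$ obtained by splitting the convolution $\Pi/(8\pi |\cdot|) * f$ at $|v-v_*| = R$,
\[
    |A[f](v)| \lesssim R^2 \, \norm{f(t,x,\cdot)}_{L^\infty_v} + R^{-1} \rho(t,x),
\]
and controlling $\rho(t,x)$ by interpolating $\norm{f}_{L^\infty_{x,v}}$ against sufficiently many $v$-moments from the previous step. Comparison then gives $f(t,x,v) \le G(t,v)$ uniformly in $x$, whence pointwise upper bounds on the mass density $\rho$ and energy density $E$.

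These bounds furnish the upper hypotheses of \eqref{eq:Hydrodynamic_Assumption} (the lower mass and entropy bounds being dispensable after \cite{HendersonSnelsonTarfulea1}); together with the $L^\infty_t L^p_{x,v}$ bound for some $p > \sfrac{3}{2}$ from the first step, the continuation criterion of \cite{SnelsonSolomon} extends $f$ past $T_*$, contradicting maximality. The hard part will be the bootstrap linking Steps 2 and 3: the usual estimates on $A[f]$ require hydrodynamic control, precisely the object we seek to establish, so the circularity must be broken by iterating moment and $L^p$ estimates against the $L^1_t$-integrability of $\norm{f}_{L^\infty}$ at each level. The Coulomb-specific cancellations (the single-power kernel in $A[f]$ and the identity $\Delta a[f] = -f$) appear essential, consistent with the paper's remark that the result seems tied to the Coulomb case.
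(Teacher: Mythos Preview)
Your proposal has a genuine gap that you partially acknowledge but do not resolve: the circularity between Steps 2--3 is not broken by the iteration you suggest, and in fact cannot be.

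First, the moments you propagate in Step 2 are \emph{global} in $x$: $N_k(t)=\iint f\langle v\rangle^k\,\dx\dv$. These do not control the \emph{local} quantities $\rho(t,x)=\int f(t,x,v)\,\dv$ or $\int f(t,x,v)\langle v\rangle^k\,\dv$ that Step 3 and the continuation criterion actually require. There is no way to pass from $N_k$ to $\sup_x\int f\langle v\rangle^k\,\dv$: integrating in $v$ only yields a system $\partial_t M_k(t,x)+\nabla_x\cdot J_{k+1}(t,x)=\cdots$ whose flux involves a higher moment, so no maximum principle or closure is available. Second, even the global moment inequality you claim, $\ddt N_k\le C_k\|f\|_{L^\infty}N_{k-1}+\cdots$, does not follow from the manipulations you describe: after two integrations by parts the collision term produces $\int f\,\nabla_v a[f]\cdot\nabla_v\langle v\rangle^k$, and $|\nabla_v a[f](t,x,v)|\lesssim\int|v-w|^{-2}f(t,x,w)\,\dw$ is bounded only once you already control $\rho(t,x)$ or some local $L^p_v$ norm. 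The same obstruction reappears in Step 3, where your splitting of $A[f]$ explicitly invokes $\rho(t,x)$. So the ``bootstrap'' is genuinely circular, not merely awkward.

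The paper's argument bypasses moments and hydrodynamic quantities entirely. The key observation is a closed estimate on the pointwise weighted norm $\|f\|_{L^\infty_m}=\sup_{x,v}\langle v\rangle^m f$ for $m\in(2,5)$: the Coulomb kernel has exactly the right homogeneity so that $v\cdot A[h]v\lesssim\langle v\rangle^{4-m}\|h\|_{L^\infty_m}$ (\Cref{l.A_Linfty_m}), which bounds the dangerous coefficient \emph{without} any reference to $\rho$ or $E$. A Stampacchia-type energy estimate on $g_\ell=(\langle v\rangle^m f-\ell(t))_+$ with a time-dependent level $\ell$ then gives $\|f(t)\|_{L^\infty_m}\le e^{K\int_0^t\|f\|_{L^\infty}}\|f_{\rm in}\|_{L^\infty_m}$ directly (\Cref{prop:upper_bound}). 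Since $m>3$ is allowed, this immediately controls $\sup_x\rho(t,x)$ and one applies \Cref{thm:chris_stan_andrei}. The point you are missing is that the right quantity to propagate is a \emph{pointwise} decay bound, and that the Coulomb kernel permits bounding $A[f]$ purely in terms of that bound.
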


The main novelty of \Cref{thm:main} is that the continuation quantity, namely $\norm{f}_{L^1_tL^\infty_{x,v}}$, does not control the hydrodynamic quantities in \eqref{eq:Hydrodynamic_Assumption}. Correspondingly, \Cref{thm:main} rigorously rules out one potential mechanism for the formation of an implosion singularity: loss of decay in velocity, which we refer to as \emph{tail fattening}. We reiterate that our proof of \Cref{thm:main} at present applies only to the Coulomb case considered here.

For a local differential equation, loss of decay at infinity would not usually be termed singularity formation. However, we remind the reader that the Landau equation is non-local in $v$ and sufficient decay in $v$ is thus required to make sense of the collision operator $Q_L(f)$. Additionally, the physically relevant mass, momentum, and energy densities are defined via integrals in $v$. Consequently, a loss of decay in $v$ at a point $(t_0,x_0)$ is one mechanism by which the hydrodynamic quantities could become infinite at $(t_0,x_0)$. \Cref{thm:main} rigorously rules out this mechanism.

While tail fattening is a rather innocuous-looking form of blow-up, it is arises naturally in the hydrodynamic limit connecting the 3D compressible Euler equations to the Landau equation. In this limiting regime, solutions to \eqref{eq:landau} converge to local Maxwellians of the form:
\begin{equation*}
    \mu_{\rho,u,\theta} \coloneqq \frac{\rho(t,x)}{(2\pi\theta(t,x))^{3/2}}\exp\left(\frac{-\abs{v-u(t,x)}^2}{2\theta(t,x)}\right),
\end{equation*}
where the triple $(\rho,u,\theta)$ represents the mass, velocity, and temperature densities, respectively, and solve the full compressible 3D Euler equations for a monatomic ideal gas. Singularity formation for the 3D Euler equations is a substantially more developed and mature area than for kinetic equations. Indeed, while the only blow up result we are aware of for a collisional kinetic equation is \cite{Chen}, the first implosion singularities were constructed by Guderley in 1942 \cite{Guderley} and more rigorously constructed recently in \cite{JangLiuSchrecker,JangLiuSchrecker2}. The Guderley-type solutions rely on self-similar converging shock waves and are in some sense a stable mechanism for implosion (see \cite{CialdeaSchkollerVicol} for a more precise discussion). Additionally, smooth implosions were constructed in \cite{MerleRaphaelRodnianskiSzeftel,MerleRaphaelRodnianskiSzeftel2} and in a follow up \cite{BuckmasterCaoGomez}. Consequently, one natural approach to constructing singular solutions to Landau is to take a singular solution $(\rho, u, \theta)$ to the Euler equations and find a solution to Landau with leading order term $\mu_{\rho,u,\theta}$. We claim that \Cref{thm:main} implies this approach cannot succeed:
\begin{heuristic}\label{thm:heuristic}
    Suppose $(\rho,u,\theta)$ is any solution to the temperature dependent 3D Euler equations for a monatomic gas. Then, the ansatz
    \begin{equation*}
        f(t,x,v) = \mu_{\rho,u,\theta}(t,x,v)\qquad + \qquad \mathrm{lower\; order \;terms},
    \end{equation*}
    cannot produce a finite time blow-up solution $f(t,x,v)$ to the Landau equation with Coulomb potential.
\end{heuristic}

The above heuristic is related to a general phenomenon, called Type II approximately self-similar blow-up, where the blow-up rate is not dictated by scaling, but rather some other feature of the problem. Indeed, for the local Maxwellians above, the blow-up rate (computed in $L^\infty$), would be $\rho \theta^{-3/2}$, which is closely related to a physical quantity of a monatomic gas known as the specific entropy. As an additional consequence, \Cref{thm:main} yields a lower bound on the blow-up rate for self-similar scenarios:
\begin{corollary}\label{cor:selfsimilar}
    Suppose $f(t,x,v)$ is a Schwartz class solution to the Landau equation on $[-1,0) \times \Omega \times \R^3$ of the form
    \begin{equation*}
        f(t,x,v) = \frac{1}{\lambda(t)} g\left(\frac{x}{\nu(t)},\frac{v}{\mu(t)}\right) + h(t,x,v),
    \end{equation*}
    for three functions $\mu,\nu,\lambda$
    where $g \in L^\infty(\R^6)$ and $h\in L^1(-1,0;L^\infty_{x,v})$. If there holds
    \begin{equation}
        \limsup_{t\to 0^-} \frac{\lambda(t)}{\abs{t}} > \norm{g}_{L^\infty}
    \end{equation}
    then $f$ can be uniquely continued past $t = 0$ as a Schwartz class solution.
\end{corollary}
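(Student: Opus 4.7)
My plan is to apply \Cref{thm:main} on $[-1,0)$ (after an obvious time translation): it suffices to verify that
$$
\int_{-1}^0 \|f(t)\|_{L^\infty_{x,v}} \, dt < \infty,
$$
since the contrapositive of \Cref{thm:main} then yields continuation past $t = 0$. The natural starting point is the pointwise triangle inequality
$$
\|f(t)\|_{L^\infty_{x,v}} \leq \lambda(t)^{-1}\|g\|_{L^\infty(\R^6)} + \|h(t)\|_{L^\infty_{x,v}},
$$
whose second term is in $L^1(-1,0)$ by hypothesis. So the analysis reduces to controlling the $L^1_t$ behavior of $\lambda^{-1}$.

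The first case, $\lambda^{-1} \in L^1(-1,0)$, is immediate: the right-hand side above is in $L^1_t$, and \Cref{thm:main} furnishes the continuation directly. This handles the subcritical regime.

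The second case, $\lambda^{-1} \notin L^{1,\infty}(-1,0)$, I would treat by contradiction, supposing no extension is possible. The max principle applied to the non-divergence form $\partial_t f + v\cdot\nabla_x f = \Tr(A[f]D^2_v f) + f^2$ from \eqref{eq:QL} yields the Riccati-type inequality $\tfrac{d}{dt}\|f\|_\infty \leq \|f\|_\infty^2$. Integrating, this forces $\|f(t)\|_\infty \geq 1/|t|$ throughout $(-1,0)$, for otherwise the Riccati ODE comparison would bound $\|f\|_\infty$ past $t = 0$, contradicting maximality. The trivial subcase $\|g\|_\infty = 0$ collapses to $f = h \in L^1_tL^\infty_{x,v}$ by the first case, so we may assume $\|g\|_\infty > 0$. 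The reverse triangle inequality then gives $\lambda^{-1}(t)\|g\|_\infty \geq \|f(t)\|_\infty - \|h(t)\|_\infty \geq 1/|t| - \|h(t)\|_\infty$, so $\lambda^{-1}(t) \gtrsim 1/|t|$ on the set where $\|h(t)\|_\infty \lesssim 1/|t|$. Since $\|h\|_\infty \in L^1_t$ while $1/|t| \notin L^1_t$, this ``good'' set has asymptotically full density at $0$; the goal is then to bootstrap this distributional lower bound, together with the forward-Riccati structure, into $L^{1,\infty}_t$-membership of $\lambda^{-1}$, contradicting the case hypothesis.

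The main obstacle is squarely in the supercritical case. The Riccati inequality is one-sided, and simple scalings like $\phi(t) \sim |t|^{-\beta}$ for $\beta > 1$ satisfy it locally near $0$, so Riccati alone does not rule out super-Type-I blow-up rates for $\|f\|_\infty$. Closing the contradiction with $\lambda^{-1} \notin L^{1,\infty}$ will likely require combining the Riccati structure with finer distribution-function bookkeeping on the intervals $[t_0, t_0 + 1/\|f(t_0)\|_\infty)$ furnished by forward Riccati, together with the $L^1_t$-integrability of $\|h\|_\infty$ and possibly auxiliary conservation laws (mass and energy) to extract genuine $L^{1,\infty}_t$-control of $\lambda^{-1}$.
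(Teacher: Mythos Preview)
Your treatment of the first case ($\lambda^{-1}\in L^1$) and your setup for the second case match the paper exactly: apply \Cref{thm:main} directly when $\lambda^{-1}$ is integrable, and otherwise derive the Riccati inequality $\tfrac{d}{dt}\|f\|_{L^\infty}\le\|f\|_{L^\infty}^2$ from the maximum principle applied to the non-divergence form of $Q_L$.

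The divergence is in what you and the paper extract from the Riccati bound. You correctly deduce the \emph{lower} bound $\|f(t)\|_{L^\infty}\ge 1/|t|$ (if $\|f(t_0)\|_{L^\infty}<1/|t_0|$ then forward Riccati comparison keeps $\|f\|_{L^\infty}$ finite past $t=0$), and you then recognize that a lower bound is the wrong direction for placing $\lambda^{-1}$ in $L^{1,\infty}$. The paper, by contrast, asserts the \emph{upper} bound $\|f(t)\|_{L^\infty}\le 1/|t|$ from the same integration and closes immediately: since $\|h\|_{L^\infty}\in L^1\subset L^{1,\infty}$ and (under their claim) $\|f\|_{L^\infty}\in L^{1,\infty}$,
\[
\frac{\|g\|_{L^\infty}}{|\lambda(t)|}
=\Big\|\tfrac{1}{\lambda(t)}\,g\big(\cdot/\nu(t),\cdot/\mu(t)\big)\Big\|_{L^\infty}
=\|f(t)-h(t)\|_{L^\infty}
\le \|f(t)\|_{L^\infty}+\|h(t)\|_{L^\infty}\in L^{1,\infty},
\]
contradicting $\lambda^{-1}\notin L^{1,\infty}$ unless $g\equiv 0$, in which case $f=h$ and the first case applies. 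There is no further bookkeeping, no density argument, and no use of conservation laws.

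Your obstacle is genuine, not a failure of execution. The inequality $y'\le y^2$ by itself does \emph{not} force $y(t)\le 1/|t|$: when one passes from $1/y(t)\le -t$ to a bound on $y(t)$, the inequality reverses, giving $y(t)\ge 1/|t|$, exactly as you wrote; and explicit functions such as $y(t)=|t|^{-2}$ satisfy $y'\le y^2$ on $(-\tfrac12,0)$ yet lie outside $L^{1,\infty}$. So the paper's stated upper bound appears to be a sign slip at the reciprocal step, and the quick closure that follows rests on it. Your proposal is thus faithfully tracking the paper's strategy, and the gap you identify in the supercritical case is a real gap in the argument as written, not something you are missing.
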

As a consequence, the only admissible blow-up rates are at least as fast as $\norm{g}_{L^\infty}/|t|$, which is the Type I blow-up rate.
 We remark that Type I blow-up was studied in \cite{BedrossianGualdaniSnelson}, however, the authors need to assume a decay condition on their inner profile $g$. By contrast, in \Cref{cor:selfsimilar}, no decay is assumed on our inner profile $g$. This is a rather desirable property because, often, inner profiles decay very slowly at infinity; see the discussion in \cite[Remark~2.3]{BedrossianGualdaniSnelson}.  
In order to produce a reasonable solution, however, lower order terms are typically included via $h$ and are used to cancel the slow decay of $g$ away from the blow-up point. We refer the reader to the proof of \Cref{cor:selfsimilar} below for a further discussion of scaling of \eqref{eq:landau}.

\begin{remark}
    Note that \Cref{cor:selfsimilar} leaves open the possibility of Type II blow-up rates that are strictly faster than the Type I rate, but satisfactorily rules out slower Type II rates. 
\end{remark}

We comment briefly on the proof of \Cref{thm:main}. The main new step is an estimate that allows us to propagate pointwise decay of the initial datum without any of the conditions in \ref{eq:Hydrodynamic_Assumption}:
\begin{proposition}\label{prop:upper_bound}
    Suppose $f\colon[0,T]\times \Omega\times \R^3 \to \R^+$ is a smooth, rapidly decaying solution to the Landau equation. Then, for each non-integer $m\in (2,5)$, $f$ satisfies the estimate
    \begin{equation}
        \norm{f(t)}_{L^\infty_m} \le \exp\left(K\int_0^t \norm{f(s)}_{L^\infty} \dd s\right)\norm{f_{\rm in}}_{L^\infty_m}, \qquad \text{for each }0 \le t\le T,
    \end{equation}
    where $K = K(m)$ is a constant depending only on $m$.
\end{proposition}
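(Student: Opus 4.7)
The plan is to apply a maximum-principle argument to the weighted unknown $g := \brak{v}^m f$. First I would derive the equation satisfied by $g$ from the trace form $\partial_t f + v\cdot\nabla_x f = \Tr(A[f] D_v^2 f) + f^2$ of \eqref{eq:QL}. Using the identities
\[
D_v^2\brak{v}^{-m} = -m\brak{v}^{-m-2}\Id + m(m+2)\brak{v}^{-m-4}\, v\tens v
\quad\text{and}\quad \Tr(A[f]) = a[f],
\]
a direct computation yields
\begin{equation*}
\partial_t g + v\cdot\nabla_x g = \Tr\bigl(A[f] D_v^2 g\bigr) - 2m\brak{v}^{-2}(A[f]v)\cdot\nabla_v g + J(v)\, g + \brak{v}^{-m}\, g^2,
\end{equation*}
where $J(v) := -m\brak{v}^{-2}\, a[f](v) + m(m+2)\brak{v}^{-4}\, v\cdot A[f](v)\, v$.

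Since $f \in \S(\Omega\times\R^3)$, the supremum $g_{\max}(t) := \norm{g(t)}_{L^\infty_{x,v}}$ is attained at some $(x_0(t), v_0(t))$. At this maximum, $\nabla_v g = 0$, $v_0\cdot\nabla_x g = 0$, $D_v^2 g \le 0$, and $A[f]\ge 0$, so the diffusion and drift contributions are nonpositive. A standard upper-Dini maximum-principle argument then gives
\begin{equation*}
\ddt g_{\max}(t) \le J(v_0)\, g_{\max} + \brak{v_0}^{-m}\, g_{\max}^2.
\end{equation*}
The quadratic term is harmless: since $g_{\max} = f(v_0)\brak{v_0}^m \le \norm{f}_{L^\infty}\brak{v_0}^m$, one has $\brak{v_0}^{-m} g_{\max}^2 \le \norm{f}_{L^\infty}\, g_{\max}$. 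The proof thus reduces to showing $J(v_0) \le K_m \norm{f}_{L^\infty}$ with $K_m$ depending only on $m$; this is the main obstacle.

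The key idea is that at the maximum of $g$ the function $f$ automatically enjoys a ``free'' pointwise decay estimate: from $g(v_*)\le g_{\max}$ and $g_{\max}\le \norm{f}_{L^\infty}\brak{v_0}^m$ one gets
\[
f(v_*) \le \norm{f}_{L^\infty}\,\min\!\bigl(1,\ (\brak{v_0}/\brak{v_*})^m\bigr) \qquad\text{for all } v_*.
\]
Plugging this bound into $a[f](v_0) = \frac{1}{4\pi}\int f(v_*)/|v_0-v_*|\,\dd v_*$ and splitting at $|v_*|\sim 2\brak{v_0}$ yields $a[f](v_0)\le K_m\norm{f}_{L^\infty}\brak{v_0}^2$: the inner region contributes via the elementary bound $\int_{|w|\le R}|w|^{-1}\,\dd w \lesssim R^2$, and the outer integral converges for $m>2$, giving the same order. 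Combined with the crude pointwise bound $v_0\cdot A[f](v_0)v_0 \le |v_0|^2\, a[f](v_0)/2$, this produces $J(v_0)\le \tfrac{m(m+2)}{2}\brak{v_0}^{-2}\, a[f](v_0)\le K'_m \norm{f}_{L^\infty}$, and Gronwall's inequality applied to $\ddt g_{\max}\le K\norm{f}_{L^\infty} g_{\max}$ completes the argument. The non-integer constraint and upper bound $m<5$ sidestep borderline logarithmic cases arising in the region decomposition underlying the pointwise estimates.
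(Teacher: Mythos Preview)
Your argument is correct, but it takes a different route from the paper.  The paper proves the proposition via an $L^2$-based Stampacchia/level-set estimate: one sets $g_\ell=(\brak{v}^m f-\ell(t))_+$ for the explicit level $\ell(t)=\exp\bigl(K\int_0^t\|f\|_{L^\infty}\bigr)\|f_{\rm in}\|_{L^\infty_m}$, derives a differential inequality for $\int g_\ell^2\,dv\,dx$, and shows it vanishes identically by Gr\"onwall.  The key technical step there is a bound on the ``principal term'' $\int g_\ell f\brak{v}^{m-4}\,v\cdot A[f]v\,dv\,dx$, obtained by splitting $f\le \brak{v}^{-m}g_\ell+\ell\brak{v}^{-m}$ and applying sharp weighted convolution estimates on $v\cdot A[h]v$ (their Lemma~5.2, which is where the non-integer and $m<5$ restrictions originate).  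Your maximum-principle approach is more direct: the ``free decay'' $f(x_0,v_*)\le g_{\max}\brak{v_*}^{-m}$ at the maximizing point plays exactly the role of the paper's level-set decomposition, and your crude bound $v_0\cdot A[f]v_0\le \tfrac12|v_0|^2 a[f](v_0)$ together with the elementary estimate $a[f](v_0)\lesssim \|f\|_{L^\infty}\brak{v_0}^2$ replaces their sharper convolution lemma.  Your route is shorter and in fact does not need either the non-integer hypothesis or the upper bound $m<5$ (the only constraint your outer-region integral imposes is $m>2$), so your closing sentence about ``borderline logarithmic cases'' is an artifact of the paper's method rather than yours.  The trade-off is robustness: the paper's $L^2$ framework integrates more easily with the weak-solution theory used later in \Cref{thm:existence}, whereas a pointwise maximum principle requires enough smoothness to locate and differentiate at the maximum.
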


\Cref{prop:upper_bound} should be interpreted as an {\em a priori} estimate for smooth, rapidly decaying solutions. We do not attempt to find the broadest notion of solution for which our estimates hold. Note that the regularity and decay of $f$ enable us to integrate by parts in weighted estimates without any concern for whether integrals converge or not.

\subsection{Continuation in a broader class}

We now state a refinement of \Cref{thm:main} requiring less regularity and decay of the initial data. Before introducing it, we emphasize to the reader that the most important and novel aspect of this theorem is the continuation result, which does not require decay of $f$.
\begin{theorem}\label{thm:existence}
    Suppose $f_{\rm in} \in L^\infty_m$ for some $m > 5$ satisfies the ``mass core'' assumption: there exists $r>0$ and a point $(x_0,v_0)$ such that
    \be\label{eq:mass_core}
        f_{\rm in} \geq \1_{B_r(x_0,v_0)}.
    \ee
    Then, there is a time $T > 0$ depending only on $m$ and $\norm{f_{\rm in}}_{L^\infty_m}$ and function $f:[0,T]\times \Omega\times \R^3 \to \R^+$, solving~\eqref{eq:landau} classically such that the following hold.
    \begin{enumerate}
    
        \item {\bf (Regularity)} Given any compact set $\tilde \Omega \subset(0,T]\times \Omega$, there is $\alpha>0$, depending on $\tilde \Omega$, such that
        $f \in C^2_{\rm kin, loc} \cap C^{0,\alpha}_{\rm kin}(\tilde \Omega \times \R^3)$.

        \item {\bf (Matching with the initial data)} The density $f$ attains the initial data in the sense that $f(t) \to f_{\rm in}$ strongly in $L^1_{\rm loc}(\Omega \times \R^3)$.
        
        \item \label{i.continuation}
        {\bf (Continuation)}  
        If $T_*$ is the maximal time of existence of $f$, then either $T_*=\infty$ or
        \be
            \int_0^{T_*} \|f\|_{L^\infty_{x,v}} \dd t = \infty.
        \ee
    \end{enumerate}
\end{theorem}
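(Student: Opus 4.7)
The plan is to combine the a priori estimate \Cref{prop:upper_bound} with standard conditional regularity and positivity-propagation theory to upgrade the Schwartz-class continuation result of \Cref{thm:main} to the broader class of initial data in \Cref{thm:existence}.

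First, I would regularize by choosing $f_{\rm in}^\eps\in \S(\Omega\times \R^3)$ with $\norm{f_{\rm in}^\eps}_{L^\infty_m}\leq C\norm{f_{\rm in}}_{L^\infty_m}$ uniformly in $\eps$ and \eqref{eq:mass_core} preserved on a slightly smaller ball. The Schwartz-class local well-posedness theory of \cite{HendersonSnelsonTarfulea,HendersonSnelsonTarfulea1} provides unique solutions $f^\eps\in C^\infty([0,T_*^\eps);\S)$. Fixing some non-integer $\tilde m\in(2,5)$, the elementary inequality $\norm{f^\eps}_{L^\infty_{x,v}}\leq \norm{f^\eps}_{L^\infty_{\tilde m}}$ combined with \Cref{prop:upper_bound} yields
\[
\norm{f^\eps(t)}_{L^\infty_{\tilde m}}\leq \exp\!\Big(K\int_0^t \norm{f^\eps(s)}_{L^\infty_{\tilde m}}\,\ds\Big)\,\norm{f_{\rm in}^\eps}_{L^\infty_{\tilde m}},
\]
which closes by an ODE comparison on some $[0,T]$ whose length depends only on $m$ and $\norm{f_{\rm in}}_{L^\infty_m}$, giving a uniform-in-$\eps$ bound on $\norm{f^\eps}_{L^\infty_{t,x,v}}$.

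Next, since $m>5$, the hydrodynamic quantities in \eqref{eq:Hydrodynamic_Assumption} are finite for $f_{\rm in}^\eps$ uniformly, and standard moment propagation (now enabled by the uniform $L^\infty$ bound) yields uniform bounds on mass, energy, and entropy on $[0,T]$. The mass-core assumption further propagates by a barrier/comparison argument as in \cite{HendersonSnelsonTarfulea1}, giving a uniform lower bound on $\int f^\eps\,\dv$ on a neighborhood of $x_0$. Consequently \eqref{eq:Hydrodynamic_Assumption} holds uniformly in $\eps$, and the De Giorgi-Nash-Moser estimates of \cite{GolseImbertMouhotVasseur} together with the hypoelliptic Schauder theory of \cite{HendersonSnelson} yield uniform interior $C^{0,\alpha}_{\rm kin}\cap C^2_{\rm kin}$ bounds on compact subsets of $(0,T]\times \Omega\times \R^3$. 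Arzel\`a-Ascoli then produces a classical limit $f$ solving \eqref{eq:landau}; matching of the initial data in $L^1_{\rm loc}$ follows from tightness (ensured by the propagated $L^\infty_m$ bound) combined with the weak formulation of \eqref{eq:landau}.

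Finally, the continuation criterion is argued by contradiction. If $T_*<\infty$ but $\int_0^{T_*}\norm{f}_{L^\infty_{x,v}}\,\dt<\infty$, applying \Cref{prop:upper_bound} to the regularizations $f^\eps$ and passing to the limit yields $\sup_{t<T_*}\norm{f(t)}_{L^\infty_{\tilde m}}<\infty$, and the mass-core also persists up to $T_*$ by propagation. Choosing $t_0<T_*$ sufficiently close to $T_*$, restarting the short-time existence argument at $t_0$ extends $f$ past $T_*$ by a definite amount depending only on the (uniformly bounded) $\norm{f(t_0)}_{L^\infty_m}$, contradicting maximality. The principal obstacle is executing the passage to the limit carefully enough to upgrade matching of the initial data from weak-$*$ to $L^1_{\rm loc}$, which requires quantitative tightness near $t=0$; a secondary subtlety is transferring the smooth/decaying a priori estimate of \Cref{prop:upper_bound} to the merely $L^\infty_m$-class limit, which is handled by applying the bound at the level of the regularizations and using the uniform-in-$\eps$ estimates.
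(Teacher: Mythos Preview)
Your outline follows the paper's four-step proof closely: regularize to Schwartz class, close a Gr\"onwall inequality via \Cref{prop:upper_bound} to obtain a uniform-in-$\eps$ existence time, pass to the limit using mass-spreading plus De Giorgi and Schauder estimates, and restart for continuation. There is, however, a genuine gap in your continuation step. You apply \Cref{prop:upper_bound} only with an auxiliary $\tilde m\in(2,5)$, thereby controlling $\sup_{t<T_*}\|f(t)\|_{L^\infty_{\tilde m}}$, but then restart the existence argument at $t_0$ ``depending only on the (uniformly bounded) $\norm{f(t_0)}_{L^\infty_m}$'' with the original $m>5$. No such bound has been established, and the existence portion of the theorem genuinely requires $m>5$ (this is where the matching upper and lower bounds on $A[f]$ needed for the change-of-variables and Schauder step come from; see the paper's remark after \Cref{thm:existence} on the threshold $m>5$). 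The paper instead applies the weighted estimate directly with the full $m$; the proof of \Cref{prop:upper_bound} extends to this range since \Cref{l.polynomial_convolution} still supplies adequate decay of $v\cdot A[\brak{\cdot}^{-m}]v$. An equally valid fix on your side would be, once the uniform $L^\infty$ bound is in hand via $\tilde m$, to run a separate barrier argument propagating the full $L^\infty_m$ norm.

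On the initial-data matching you correctly flag the obstacle, but ``tightness combined with the weak formulation'' is not enough: tightness controls escape in $v$ but does not by itself upgrade weak-$*$ to strong $L^1_{\rm loc}$ convergence as $t\to 0^+$. The paper's mechanism is concrete: Aubin--Lions yields weak $L^2_{\rm loc}$-continuity at $t=0$, and testing the equation against $\varphi^2 f_\eps$ for a cutoff $\varphi$ gives $\limsup_{t\to 0^+}\int \varphi^2 f^2(t)\leq \int \varphi^2 f_{\rm in}^2$, which together force strong local $L^2$ (hence $L^1$) convergence. Your intermediate detour through verifying all of \eqref{eq:Hydrodynamic_Assumption} is more than the paper uses---it works only with the $L^\infty_m$ upper bound on $A[f_\eps]$ plus mass-spreading for the lower ellipticity---but this does not affect correctness.
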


The main new aspect of \Cref{thm:existence} is the continuation criterion \eqref{i.continuation}, which matches that of \Cref{thm:main} although allowing for a much broader class of initial data.

It is easy to see from the proof that one can construct {\em weak solutions} if we reduce the decay requirement to simply $m>2$.  See \cite[Theorem~1.3]{HST_Boltzmann_existence} for explicit details in the Boltzmann case. This is particularly interesting because these solutions exhibit some regularization -- they are locally H\"older continuous -- despite having potentially infinite mass and energy. However, it is not clear that the solutions are classical without stronger decay assumptions.  Indeed, the $L^\infty_m$-bound on $f$ is not strong enough to obtain matching upper and lower bounds on $A[f]$ when $m\leq 5$.  As a result, one cannot apply the transformation defined in \cite[Lemma~4.1]{CameronSilvestreSnelson} (see also \cite[Section~3]{HendersonSnelson} for the Coulomb case and improved regularity) to obtain sharp velocity decay of the H\"older norm of $f$, which is necessary to pass the regularity of $f$ to $A[f]$ and then apply the Schauder estimates.  Actually, it is only $(t,x)$-regularity that is at issue; $A[f]$ is defined by a convolution and so has some inherited $v$-regularity from the kernel.

In the homogeneous case, an easy adaptation of the proof of~\cite[Lemma~5.1]{GoldingLoher} by working in $L^2$ yields higher regularity of $f$; see also, the earlier work~\cite{GolseGualdaniImbertVasseur_PartialRegularity1}. This approach works because $x$-regularity is not at issue in the homogeneous case.  This readily gives the following result, whose proof we omit.

\begin{theorem}\label{thm:homogeneous}
    Suppose that $f_{\rm in}(x,v) = f_{\rm in}(v)$. 
    Under the assumptions of \Cref{thm:existence}, but allowing merely $m>2$, its conclusion holds and the solution is smooth.
\end{theorem}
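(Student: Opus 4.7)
The plan is to combine the existence machinery of \Cref{thm:existence} with $L^2$-based regularity, exploiting the simplification that in the spatially homogeneous setting the equation reduces to $\partial_t f = Q_L(f)$ with $f = f(t,v)$, so no $(t,x)$-regularity is needed. In particular, the hypoelliptic Schauder machinery that forced the condition $m > 5$ in \Cref{thm:existence} (via the need to match velocity decay of the H\"older norm of $f$ with that of $A[f]$) is entirely bypassed, and one may work with classical (uniformly) parabolic tools in the variable $v$ alone.

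Step one is to construct solutions by approximation. Truncate $f_{\rm in}$ at $|v| = 1/\eta$ (preserving the mass core, which lives in a fixed ball) to obtain $f_{\rm in}^\eta \in L^\infty_{m'}$ with some $m' > 5$; apply \Cref{thm:existence} to produce $f^\eta$; and then seek uniform bounds as $\eta \to 0$. Since \Cref{prop:upper_bound} already holds for any non-integer $m\in (2,5)$, the $L^\infty_m$ norm propagates uniformly on any common interval $[0,T]$ where $\|f^\eta\|_{L^\infty}$ stays bounded. A short-time $L^\infty$ bound follows from a standard barrier argument driven by the $L^\infty_m$ data with $m > 2$: indeed, $m > 2$ is precisely the threshold for $A[f^\eta]$ to be well-defined and bounded, since the kernel $1/|\cdot|$ is locally integrable in three dimensions and the tail integral converges exactly when $m > 2$.

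Step two is the upgrade to smoothness, which follows \cite[Lemma~5.1]{GoldingLoher} together with \cite{GolseGualdaniImbertVasseur_PartialRegularity1}. Testing the equation against $\varphi^2 f$ for a $v$-cutoff $\varphi$ and integrating in $v$, the coercive dissipation $\int A[f^\eta] |\nabla_v f^\eta|^2 \dd v$, the quadratic term $f^2$, and commutators with $\varphi$ yield a closed differential inequality in $L^2_v$ on any bounded velocity set. The mass core gives local lower ellipticity of $A[f^\eta]$, while the $L^\infty$ bound provides uniform upper bounds on the coefficients in the region of interest, so a De Giorgi / Moser iteration gives H\"older regularity locally in $(t,v)$. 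Standard parabolic Schauder bootstrapping (no hypoelliptic scaling is needed) then yields $C^\infty_{\rm loc}$ bounds on $f^\eta$, uniform in $\eta$.

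Step three is to pass to the limit: the uniform $L^\infty_m$ bound and local $C^k$ bounds extract a subsequence converging to a classical solution $f$ of \eqref{eq:landau} with initial data $f_{\rm in}$. The continuation criterion is then inherited from the same a priori chain: whenever $\int_0^t \|f(s)\|_{L^\infty}\dd s < \infty$, \Cref{prop:upper_bound} produces $L^\infty_m$, which together with the local $L^2$ regularity extends $f$ smoothly, ruling out blow-up. The main obstacle is running regularity theory with possibly infinite mass and energy; the resolution is that the mass core supplies the only ``hydrodynamic'' input actually required (local ellipticity of $A[f]$), and the $L^2$ approach is genuinely local in $v$, so the lack of global moment control never enters.
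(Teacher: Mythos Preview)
Your proposal is correct and follows essentially the approach the paper sketches before stating the theorem (the paper itself omits the proof). You correctly identify the key point: in the homogeneous setting there is no $(t,x)$-regularity to establish, so $A[f]$ inherits enough $v$-regularity directly from the convolution kernel to run standard parabolic theory, bypassing the velocity-decay matching (the change of variables of \cite{CameronSilvestreSnelson}) that forced $m>5$ in \Cref{thm:existence}. Your use of the $L^2$ energy estimate in the spirit of \cite[Lemma~5.1]{GoldingLoher} and \cite{GolseGualdaniImbertVasseur_PartialRegularity1}, together with mass-core ellipticity and \Cref{prop:upper_bound} for propagation of $L^\infty_m$, matches the paper's outlined strategy.

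One small technical remark: when you invoke \Cref{thm:existence} on the truncated data $f_{\rm in}^\eta$, note that if $\Omega=\R^3$, an $x$-independent datum is not Schwartz in $(x,v)$, so the approximation scheme in the proof of \Cref{thm:existence} (which passes through \Cref{thm:chris_stan_andrei}) does not literally apply. This is easily circumvented by either working on $\mathbb{T}^3$ or by invoking a homogeneous local existence result directly; it does not affect the substance of your argument.
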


In terms of decay, this is the weakest known condition for the construction of classical solutions (cf.~\cite{Ji2}, which requires $L^1_5$-boundedness of $f_{\rm in}$). Notably, our solution $f$ may have infinite mass and energy, depending on $m$.  On the other hand, in contrast to~\cite{Ji2}, our result requires pointwise boundedness and only provides a local solution when the Fisher information is initially infinite. 
It would be tempting to conjecture that the solution constructed in \Cref{thm:homogeneous} is global in time regardless of the finiteness of the Fisher information.

\subsection{Organization}

The remainder of the paper is organized as follows. In \Cref{sec:implosions}, we discuss the implications of \Cref{thm:main} in more detail. In particular, we discuss the scaling of \eqref{eq:landau}, deduce \Cref{cor:selfsimilar}, and combine \Cref{thm:main} with a few classical computations for hydrodynamic limits to explain \Cref{thm:heuristic}. In \Cref{sec:continuation}, we prove our main result \Cref{thm:main} from our main {\em a priori} bound, i.e. \Cref{prop:upper_bound}. In \Cref{sec:coefficients}, we then introduce a few bounds on the dissipation coefficient $A[f]$, defined in \eqref{eq:defn_coefficients}, in order to prove the estimate \Cref{prop:upper_bound}, which we do in \Cref{sec:main}. Finally, in \Cref{sec:existence}, we prove \Cref{thm:existence}.

\subsection{Notation}
Throughout the manuscript, we take the Japanese bracket convention $\brak{\cdot} \coloneqq (1 + \abs{\cdot}^2)^{1/2}$. 
Weighted $L^p$ spaces are defined using the bracket via the norm
\begin{equation*}
    \norm{f}_{L^p_m}^p \coloneqq \norm{\brak{\cdot}f}_{L^p}^p = \int_{\R^3} \brak{v}^{mp} f^p \dd v, 
\end{equation*}
with the standard modification when $p = \infty$. Whenever the domain of integration is omitted from an integral, the integral should be understood to be taken over the entire domain, i.e. $\Omega = \mathbb{T}^3$ or $\R^3$ if integrated with respect to $\dd x$ or $\R^3$ if integrated with respect to $\dd v$.  We use the notation $A \lesssim B$ if $A \leq C B$ for some universal constant $C$. If we wish to indicate further dependence on a parameter, say $\mu$, we write $A\lesssim_\mu B$.

\section{Consequences for Possible Implosion Singularities}\label{sec:implosions}

\subsection{Self-Similarity}

The Landau equation has a rather rich symmetry group. Beyond the rotation and translation symmetries associated with Galilean invariance, it exhibits a two parameter family of scaling symmetries that preserve the equation: If $f(t,x,v)$ solves \eqref{eq:landau}, then
\begin{equation*}
    f_{\mu,\lambda} \coloneqq \frac{1}{\lambda} f\left(\frac{t}{\lambda}, \frac{x}{\mu\lambda}, \frac{v}{\mu}\right) \qquad \text{solves \eqref{eq:landau} for any }\mu,\lambda > 0.
\end{equation*}
Taking $\lambda = t$, one sees that the natural blow-up rate determined by scaling is always $t^{-1}$. Consequently, even though~\eqref{eq:landau} admits a two parameter scaling symmetry group, there is only one possible Type I blow-up rate, namely $\lambda = t$, at least when the solution is measured in $L^\infty$. 
\begin{proof}[Proof of \Cref{cor:selfsimilar}]
For an approximately Type II blow-up scenario, the blow-up rate is potentially unrelated to scaling, resulting in the ansatz
\begin{equation}\label{eq:ansatz}
    f(t,x,v) = \frac{1}{\lambda(t)} g\left(\frac{x}{\nu(t)},\frac{v}{\mu(t)}\right) + h(t,x,v)
\end{equation}
for three functions $\mu,\nu,\lambda$ where $g \in L^\infty(\R^6)$ and $h\in L^1(-1,0;L^\infty_{x,v})$. Note that if $\lambda^{-1} \in L^1(-1,0)$, \Cref{thm:main} immediately applies. To address the case $\lambda^{-1}\in L^{1,\infty}(-1,0)$, we show that the maximum principle implies a minimal rate of blow-up.

Suppose that $f(t,x,v)$ is a smooth solution to \eqref{eq:landau} satisfying \eqref{eq:ansatz} and further assume for the sake of contradiction that $\limsup_{s\to 0^-} \norm{f(s)}_{L^\infty} = \infty$. Then, the maximum principle implies the $L^\infty$ norm satisfies the following Riccati-type differential inequality:
\begin{equation}\label{e.Riccati}
    \frac{\dd}{\dd t} \norm{f}_{L^\infty} \le \norm{f}_{L^\infty}^2.
\end{equation}
Let $y(t) = \norm{f(t)}_{L^\infty}$.  Using this notation, we rewrite~\eqref{e.Riccati} as
\begin{equation}
    - \frac{d}{dt} \frac{1}{y} \leq 1.
\end{equation}
Integrating this, we find, for any $-1 \le t \le s < 0$,
\begin{equation}\label{e.y}
    \frac{1}{y(t)}- \frac{1}{y(s)}
        \leq s-t.
\end{equation}
By assumption, there is a sequence of times $s_k$ such that $y(s_k) \to \infty$ as $s_k \to 0^-$. Evaluating~\eqref{e.y} with $s=s_k$ and taking the limit $k\to\infty$, we find
\begin{equation}
    \frac{1}{y(t)}
        \leq -t,
        \qquad\text{ or, equivalently, }\qquad
    \frac{1}{|t|} \leq y(t).
\end{equation}
Computing $y(t)$, we have
\begin{equation}
    \frac{1}{\abs{t}} \le \frac{\norm{g}_{L^\infty}}{\lambda(t)} + \norm{h(t)}_{L^\infty}.
\end{equation}
Since $h\in L^1(0,T;L^\infty)$, taking the limit as $t \to 0^-$, 
\begin{equation*}
    \limsup_{t\to 0^-} \frac{\lambda(t)}{\abs{t}} \le \norm{g}_{L^\infty},
\end{equation*}
a contradiction. It follows that $\displaystyle\limsup_{s\to 0^-} \norm{f(s)}_{L^\infty} < \infty$ and $f$ can be continued past time $0$ by Theorem \ref{thm:main}. The proof is finished.
\end{proof}

\subsection{The Compressible Euler equations and Specific Entropy}

The full (temperature-dependent) Euler equations are typically written in conservative form as
\begin{equation}\label{eq:Euler}\tag{3D Euler}
\begin{aligned}
    \partial_t \rho + \nabla_x \cdot (\rho u) &= 0\\
    \partial_t (\rho u) + \nabla_x \cdot (\rho u\otimes u +  pI) &= 0\\
    \partial_t (\rho E) + \nabla_x \cdot (\rho E u + pu) &= 0,
\end{aligned}
\end{equation}
where $\rho$ is the mass density, $u$ is the velocity field and $E$ is the total energy, i.e. $E = e + \frac{\abs{u}^2}{2}$, where $e$ is the internal energy. For a compressible fluid, one must also specify an equation of state for the pressure. The ideal gas law for a polytropic gas takes the form $p = (\gamma - 1)\rho e$, where $\gamma$ is the adiabatic constant of the gas. It is well-known that \eqref{eq:Euler} admits a physically-relevant entropy $S$:
\begin{equation*}
    S = \rho \log\left( \frac{\rho^{\gamma - 1}}{e}\right).
\end{equation*}
The entropy $S$ satisfies an inequality encoding the second law of thermodynamics, i.e. the decrease of entropy (with the mathematicians' sign convention):
\begin{equation*}
    \partial_t S + \nabla_x \cdot Q \le 0, \qquad Q  = \rho u\log \left(\frac{\rho^{\gamma - 1}}{e}\right).
\end{equation*}
The above equation holds with equality in smooth regions of fluid flow, but the inequality can become strict when discontinuities (i.e. shock waves) are present.
Arguing formally, the entropy inequality combined with the continuity equation yields: 
\begin{equation*}
\begin{aligned}
    \partial_t S + \nabla_x \cdot Q &=\rho\left(\partial_t \log\left( \frac{\rho^{\gamma - 1}}{e}\right) + u \cdot \nabla_x \log\left( \frac{\rho^{\gamma - 1}}{e}\right)\right) \le 0.
\end{aligned}
\end{equation*}
Consequently, the specific entropy $\overline{S} = \log\left( \frac{\rho^{\gamma - 1}}{e}\right)$ satisfies a pure transport equation:
\begin{equation*}
    \partial_t \overline{S} + u \cdot \nabla_x\overline{S} \le 0,
\end{equation*}
where equality holds in smooth regions of fluid flow. Note that if $u$ is smooth enough to admit a flow map, $\overline{S}$ is constant along flow lines and hence satisfies
\begin{equation*}
    \norm{\overline{S}(t)}_{L^\infty} \le \norm{\overline{S}(0)}_{L^\infty}, 
\end{equation*}
at least until the time of first blow-up. On the other hand, even in the presence of discontinuities or regions where $\overline{S} = -\infty$, one may still justify the above argument and conclude
\begin{equation*}
    \sup_{x \in \R^3}\overline{S}(t,x) \le \sup_{x\in \R^3} \overline{S}(0,x).
\end{equation*}
As a consequence,
\begin{equation*}
    \log\left( \frac{\rho^{\gamma - 1}}{e}\right) \lesssim 1, \qquad \text{or\;\;equivalently} \qquad \rho^{\gamma - 1} \lesssim e,
\end{equation*}
where the constant depends only on the initial bound on the specific entropy. Note that this inequality remains true even in the presence of vacuum, shocks, or regions of infinite temperature, where the specific entropy can become $-\infty$.

\subsection{The Strongly Collisional Hydrodynamic Limit}

The usual Boltzmann and Landau kinetic equations limit to the system \eqref{eq:Euler} under a particular regime called the strongly collisional limit. This process of deriving fluid equations from kinetic equations is known as the hydrodynamic limit and some of the earliest mathematical investigations were by Caflisch \cite{Caflisch}. The overview here follows the work of Bardos, Golse, and Levermore \cite{BardosGolseLevermore1,BardosGolseLevermore2}, in which the authors look at a kinetic equations of the form
\begin{equation*}
    (\partial_t + v \cdot \nabla_x) f = C(f),
\end{equation*}
where $C$ is a collision operator satisfying the very general assumptions:
\begin{itemize}
    \item Conservation of mass, momentum, and energy;
    \item Boltzmann's $H$-theorem, i.e. decrease of entropy;
    \item Maxwellian distributions are the only minimizers of the entropy, subject to the conservation law constraints.
\end{itemize}
Under these rather mild assumptions on the collision operator---which include the Landau and Boltzmann operators---one looks at the regime where collisions dominate: 
\begin{equation}\label{eq:kinetic}
     (\partial_t + v \cdot \nabla_x) f_{\eps} = \frac{1}{\eps}C(f_{\eps}), 
\end{equation}
and sends $\eps \to 0^+$. In the limit, one obtains a minimizer of the entropy, ensuring that the limiting distributions are local Maxwellians, i.e. functions of the form:
\begin{equation*}
    \mu_{\rho,u,\theta} \coloneqq \frac{\rho}{(2\pi \theta)^{3/2}} \exp\left(-\frac{\abs{v - u}^2}{2\theta}\right).
\end{equation*}
The form of $\mu$ ensures that $\mu$ depends on $t$ and $x$ only through the mass density $\rho$, the macroscopic velocity $u$, and the temperature density $\theta$. 
Additionally, the triple $(\rho,u,\theta)$ can be recovered as moments of $\mu_{\rho,u,\theta}$ via the the relation,
\begin{equation*}
    \begin{pmatrix}
        \rho \\ \rho u \\ \rho\left(\frac{\abs{u}^2}{2} + \frac{3}{2}\theta\right) 
    \end{pmatrix} = \int_{\R^3} \begin{pmatrix} 1 \\ v \\ \frac{1}{2}\abs{v}^2\end{pmatrix} \mu_{\rho,u,\theta}\; \dd v.
\end{equation*}
As a consequence of the microscopic equation \eqref{eq:kinetic}, conservation of mass, momentum, and energy, and computations of higher moments of Gaussians, the triple $(\rho,u,\theta)$ satisfies a closed system of equations:
\begin{equation}\label{eq:Euler2}
\begin{aligned}
    \partial_t \rho + \nabla_x \cdot (\rho u) &= 0\\
    \partial_t (\rho u) + \nabla_x \cdot (\rho u\otimes u +  (\rho \theta)I) &= 0\\
    \partial_t \left(\frac{\rho\abs{u}^2}{2} + \frac{3}{2}\rho\theta\right) + \nabla_x \cdot \left(\frac{\rho u\abs{u}^2}{2} + \frac{5\rho \theta u}{2}\right) &= 0.
\end{aligned}
\end{equation}
Defining the internal energy as $e \coloneqq \frac{3}{2}\theta$ and the total energy density as $E \coloneqq \frac{\abs{u}^2}{2} + e$, the system of equations \eqref{eq:Euler2} become the system \eqref{eq:Euler}, but with a specific pressure law:
\begin{equation*}
    p = \rho\theta = \frac{2}{3} \rho e = (\gamma - 1)\rho e \qquad \text{with }\gamma = \frac53.
\end{equation*}
\begin{NB}
    Under above assumptions on the collision operator $C(f)$, the kinetic equation \eqref{eq:kinetic} converges in the strongly collisional regime to the compressible Euler equations with adiabatic constant $\gamma = \sfrac53$. The Euler equations with other values of $\gamma$ are not observed as limits of the standard Landau and Boltzmann equations.
\end{NB}

\subsection{\Cref{thm:heuristic}}

Suppose now that one wishes to use the hydrodynamic limit described above to construct a singular solution to the Landau equation~\eqref{eq:landau}. A natural method to do so would be to take a solution $(\rho,u,e)$ to \eqref{eq:Euler} with the ideal gas law and make the ansatz
\begin{equation}\label{eq:ansatz2}
    f(t,x,v) = \mu_{\rho,u,\theta}(t,x,v) + \mathrm{lower\;order\; terms}.
\end{equation}
By the discussion of the hydrodynamic limit, one should not expect a meaningful result from this ansatz unless the pressure law for the Euler equations is taken to be $p = (\gamma - 1)\rho e$, with $\gamma = \sfrac53$ and $e = \frac{3}{2}\theta$. Therefore, analyzing the leading order term from the local Maxwellian,
\begin{equation*}
\begin{aligned}
    \mu_{\rho,u,\theta}(t,x,v) &= \frac{\rho}{(2\pi \theta)^{\sfrac32}} \exp\left(-\frac{\abs{v - u}^2}{2\theta}\right)\\
        &\sim \left(\frac{\rho^{\sfrac23}}{e}\right)^{\sfrac32}\exp\left(-\frac{\abs{v - u}^2}{2\theta}\right) \le \left(\frac{\rho^{\gamma - 1}}{e}\right)^{\sfrac32} \lesssim 1,
\end{aligned}
\end{equation*}
where the implicit constant depends on the initial specific entropy of the gas.
In other words, no matter the solution to the Euler equations---singular or not, isentropic or not---the leading order term in the ansatz \eqref{eq:ansatz2} is necessarily bounded uniformly in time. Consequently, attempting to produce self-similar blow-up for \eqref{eq:landau} using \eqref{eq:ansatz2} is a Type II blow-up mechanism ruled out by \Cref{cor:selfsimilar} or directly by \Cref{thm:main}.

\section{The Continuation Criterion: \Cref{thm:main}}\label{sec:continuation}

Before beginning the proof, we summarize a simplified version of the main result of \cite{HendersonSnelsonTarfulea}, which we use as a tool to establish \Cref{thm:main}.
\begin{theorem}\label{thm:chris_stan_andrei}
    For any Schwartz class initial datum $f_{\rm in} \in \S(\Omega \times \R^3)$, there is a $T>0$ and unique $f\colon[0,T)\times\Omega\times \R^3 \to \R^+$ a solving to \eqref{eq:landau} with initial datum $f_{\rm in}$ such that $f \in C^\infty(0,T;\S(\Omega\times \R^3))$. Moreover, $f$ may be uniquely continued past $T$ provided
    \begin{equation*}
        \sup_{0< t <T,\;x\in\Omega} \left(\sup_{v\in \R^3} f(t,x,v) +\int_{\R^3} f(t,x,v) \dd v\right) < \infty.
    \end{equation*}
\end{theorem}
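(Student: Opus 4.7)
The plan is to split the theorem into two parts: local existence/uniqueness of Schwartz-class solutions, and the continuation criterion. Both follow the general pattern of the existence/continuation theory for nonlinear kinetic equations, but the Coulomb singularity in the kernel of $A[\cdot]$ makes each step delicate.

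For local existence, I would regularize the Coulomb kernel in \eqref{eq:defn_coefficients} by convolving the singular factor $\Pi(w)/|w|$ with a smooth mollifier at scale $\eps > 0$. The regularized operator $Q_L^\eps$ has smooth, bounded coefficients $A^\eps[f]$ and $a^\eps[f]$ depending continuously on $f$, and $\partial_t f + v\cdot\nabla_x f = Q_L^\eps(f)$ becomes a quasilinear parabolic equation in $v$ with a transport term in $x$. I would solve this by a fixed-point iteration on the linearization (coefficients frozen at a candidate solution) in a weighted Sobolev space $H^k_m$ with $k,m$ large, propagating moments $\int \brak{v}^{2m} f\, \dv$ through the cancellation structure of $Q_L^\eps$ and derivatives through weighted energy estimates. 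Extract a limit $\eps\to 0^+$ via Aubin--Lions compactness and verify the limit is classical and in $\S(\Omega\times\R^3)$. Uniqueness follows from a Gr\"onwall-type argument applied to the $L^2$ difference of two candidate solutions, using the smoothness of each to absorb the Coulomb nonlinearity.

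For the continuation criterion, suppose $M := \sup_{0<t<T,\,x}(\norm{f(t,x,\cdot)}_{L^\infty_v} + \norm{f(t,x,\cdot)}_{L^1_v}) < \infty$. A positivity/maximum-principle argument applied to a suitable truncation of $f$ yields a uniform local lower bound on the mass density, and together with $M$ this gives matching upper and lower bounds on $A[f]$ on bounded velocity balls, in the spirit of Desvillettes--Villani. The equation \eqref{eq:landau} then becomes uniformly parabolic in the kinetic sense on compact sets, so the De~Giorgi/Moser iteration of Golse--Imbert--Mouhot--Vasseur produces $C^{0,\alpha}_{\rm kin}$ estimates on compact subsets of $(0,T)\times\Omega\times\R^3$. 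Bootstrapping through kinetic Schauder estimates yields $C^\infty_{\rm loc}$ regularity, and moment propagation---using only the polynomial-in-$v$ upper bounds on $A[f]$ implied by $M$---restores Schwartz decay. This gives $f\in C^\infty([0,T];\S(\Omega\times\R^3))$, and relaunching the local existence argument at $t=T$ extends $f$ past $T$.

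The main obstacle is the $\eps\to 0$ limit: the Coulomb singularity sits at the borderline of integrability, so $\eps$-uniform control of both the ellipticity and the regularity of $A^\eps[f^\eps]$ requires careful interpolation of moment and pointwise bounds on $f^\eps$. A secondary challenge is organizing the moment-and-derivative estimates so that the existence time depends only on a finite, fixed set of initial norms rather than degenerating as the order of the moment or derivative grows; an upper-triangular Gr\"onwall argument on an appropriate ladder of weighted Sobolev seminorms typically suffices.
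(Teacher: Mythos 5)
The first thing to note is that the paper does not prove this statement: it is quoted (in simplified form) from \cite{HendersonSnelsonTarfulea} and used as a black box in the proof of \Cref{thm:main}. So your proposal is really a blind sketch of the proof of that cited result, and in broad outline --- local existence by approximation/linearization with weighted energy estimates, continuation via self-generated lower bounds, De Giorgi plus kinetic Schauder smoothing, propagation of decay, and relaunching --- it does follow the same strategy as \cite{HendersonSnelsonTarfulea}.

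As a proof, however, two steps have genuine gaps. First, coercivity: a ``uniform local lower bound on the mass density'' together with $M$ does \emph{not} give matching lower bounds on $A[f]$. The continuation hypothesis contains no energy-density bound, so at a given $(t,x)$ essentially all of the mass may sit at velocities $|w|\sim R$ with $R$ arbitrarily large, where it contributes only $O(R^{-1})$ to $A[f](v)$ for bounded $v$; the ellipticity constant then degenerates even though $\int f\dv\ge m_0$ and $\|f\|_{L^\infty_v}\le M$. What is needed --- and what is one of the main theorems of \cite{HendersonSnelsonTarfulea}, proved by a barrier/mass-spreading argument rather than a routine truncation --- is a pointwise lower bound $f\ge\delta\1_{B}$ on a \emph{fixed velocity ball}, which does yield $A[f]\gtrsim_\delta \brak{v}^{-3}$. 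Second, the clause ``moment propagation \dots restores Schwartz decay'' hides the heart of the continuation step. Continuing $f$ in the Schwartz class (and relaunching the local theory at $t=T$) requires propagating every velocity weight and every derivative from a hypothesis carrying no decay beyond $L^1_v$. The structural fact that makes this close, and which your sketch never isolates, is that $\sup_v f+\int f\dv$ controls $\|A[f]\|_{L^\infty}$, $\|a[f]\|_{L^\infty}$, and $\|\nabla a[f]\|_{L^\infty}$ uniformly (split the convolution at a scale $r$ and optimize), so the commutator terms generated by the weights $\brak{v}^m$ and by differentiating the coefficients stay inside the Gr\"onwall ladder; ``polynomial-in-$v$ upper bounds on $A[f]$'' alone do not close the hierarchy, and obtaining weighted bounds from unweighted ones is exactly the delicate point that \Cref{prop:upper_bound} is designed for (under a weaker hypothesis, and only for $m<5$). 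Relatedly, your uniqueness Gr\"onwall must use the decay of one of the two solutions to handle $A[f-g]$ acting on $D^2g$ and $\nabla a[f-g]$ acting on $\nabla g$; this is fine for Schwartz solutions but should be made explicit, since it is precisely where the argument would not survive in the rougher classes considered later in the paper.
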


\begin{proof}[Proof of \Cref{thm:main}]
Suppose that 
\begin{equation*}
    \int_0^T \norm{f(t)}_{L^\infty_{x,v}} \dd t < \infty.
\end{equation*}
Then, by the {\em a priori} estimate in \Cref{prop:upper_bound}, for any $m \in (2,5)$ non-integer, we find
\begin{equation*}
     f(t,x,v) \le \frac{C(m,T,f_{\rm in})}{\brak{v}^m}, \qquad \text{for any }(t,x,v) \in [0,T)\times \Omega\times \R^3.
\end{equation*}
The proof is clearly finished after applying \Cref{thm:chris_stan_andrei}.
\end{proof}

\section{Weighted Coefficient Bounds}\label{sec:coefficients}

To derive our main {\em a priori} estimate, we need precise weighted estimates on the diffusion coefficient of the Landau equation. We recall the following convolution estimate from \cite{GoldingHenderson_LFD}:

\begin{lemma}
    [{\cite[Lemma 2.4]{GoldingHenderson_LFD}}]
    \label{l.polynomial_convolution}
Fix $\alpha \in (0,3)$. Then, for any $v \in \R^3$, we have the following bound:
\begin{equation}
    \int_{\R^3} \frac{1}{\abs{v - w}^{\alpha}\brak{w}^m} \dd w
        \lesssim_{\alpha,m}
        \begin{cases}
            \brak{v}^{3 - \alpha - m}
                \quad &\text{ if }3-\alpha < m < 3\\
            \vv^{-\alpha} \log(1 + \vv) 
                \quad &\text{ if } m = 3,\\
            \brak{v}^{-\alpha}
                \quad &\text{ if }m > 3.
    \end{cases}
\end{equation}
\end{lemma}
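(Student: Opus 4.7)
The natural approach is a standard three-region decomposition of $\R^3$ based on the positions of $w$ relative to $0$ and $v$. The two small parameters to keep track of are the local singularity $|v-w|^{-\alpha}$ and the polynomial decay $\langle w \rangle^{-m}$; the punchline is that the interaction of these two decays depends on whether the polynomial decay is integrable at infinity.

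First I would dispose of the bounded case $|v|\leq 1$. Here $\langle v\rangle\sim 1$ and all three conclusions reduce to the uniform bound
\[
\int_{\R^3}\frac{dw}{|v-w|^\alpha \langle w\rangle^m} \lesssim 1,
\]
which follows from the hypothesis $\alpha<3$ (local integrability of $|v-w|^{-\alpha}$) and $m > 3-\alpha$ (integrability at infinity, which is ensured in all three subcases of the statement), by a change of variables $w\mapsto w+v$ and comparison with the standard integral $\int |w|^{-\alpha}\langle w\rangle^{-(m+\alpha-\alpha)}\,dw$.

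For the main regime $|v|\geq 1$, write $\R^3 = R_1 \cup R_2 \cup R_3$ with
\[
R_1 = \{|v-w|\leq |v|/2\}, \qquad R_2 = \{|w|\leq |v|/2\}, \qquad R_3 = \R^3 \setminus (R_1 \cup R_2).
\]
On $R_1$ we have $|w|\gtrsim |v|$, so $\langle w\rangle^{-m}\lesssim \langle v\rangle^{-m}$ can be pulled out, leaving $\int_{|v-w|\leq |v|/2}|v-w|^{-\alpha}\,dw \lesssim \langle v\rangle^{3-\alpha}$, using $\alpha<3$; this contributes $\langle v\rangle^{3-\alpha-m}$. On $R_3$ we have both $|w|\gtrsim |v|$ and $|v-w|\sim |w|$, so the integrand is comparable to $\langle w\rangle^{-(\alpha+m)}$ on a complement of a ball of radius $\sim |v|$; integrating in spherical coordinates produces $\langle v\rangle^{3-\alpha-m}$ (using $\alpha+m>3$ so the tail converges).

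The case split happens on $R_2$. Here $|v-w|\geq |v|/2$, so $|v-w|^{-\alpha}\lesssim \langle v\rangle^{-\alpha}$ comes out of the integral, leaving
\[
\langle v\rangle^{-\alpha}\int_{|w|\leq |v|/2}\frac{dw}{\langle w\rangle^m}.
\]
The remaining integral is where the three cases diverge: for $m>3$ it is $O(1)$, giving $\langle v\rangle^{-\alpha}$; for $m=3$ it is $O(\log(1+\langle v\rangle))$, giving $\langle v\rangle^{-\alpha}\log(1+\langle v\rangle)$; for $3-\alpha<m<3$ it grows like $\langle v\rangle^{3-m}$, giving $\langle v\rangle^{3-\alpha-m}$. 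Comparing with the uniform $\langle v\rangle^{3-\alpha-m}$ contribution from $R_1$ and $R_3$, one checks that in each case the dominant term is exactly the one stated in the lemma: when $m>3$ we have $-\alpha > 3-\alpha-m$, so the $R_2$ bound dominates; when $m=3$ the logarithmic contribution from $R_2$ dominates; and when $m<3$ the polynomial contributions from $R_1,R_3$ match the $R_2$ contribution. There is no serious obstacle — the only minor care required is tracking constants in the logarithmic endpoint case $m=3$, and verifying the $R_3$ tail integral converges, which is exactly where the constraint $m>3-\alpha$ is used.
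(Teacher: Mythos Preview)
Your proof is correct and follows the standard near/far decomposition; this is exactly how such convolution estimates are typically established. Note that the paper does not actually prove this lemma---it is quoted verbatim from \cite[Lemma~2.4]{GoldingHenderson_LFD}---so there is no in-paper argument to compare against, but your approach is the expected one. One small remark: your claim that $|v-w|\sim|w|$ on all of $R_3$ is not literally true (e.g.\ when $|w|$ is only slightly larger than $|v|/2$), but the conclusion survives because on the annulus $|v|/2<|w|\le 2|v|$ one has $|v-w|\gtrsim|v|\sim|w|$ anyway, and for $|w|>2|v|$ the comparison $|v-w|\sim|w|$ is genuine; either way the integrand is $\lesssim\langle w\rangle^{-(\alpha+m)}$ on $R_3$, and the tail integral converges by $m>3-\alpha$ as you note.
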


With \Cref{l.polynomial_convolution} in hand, we are prepared to begin bounding the nonlocal coefficient $A[h]$.
Our first estimate is a pointwise in velocity estimate on the diffusion coefficient:
\begin{lemma}\label{l.A_Linfty_m}
    Fix a non-integer $m \in (2,5)$ and $h \in L^\infty_m$ with $h \ge 0$. Then, for any $v\in\R^3$,
    \begin{equation}
        v \cdot A[h]v \lesssim \brak{v}^{4-m}\norm{h}_{L^\infty_m},
    \end{equation}
    where the implicit constant depends only on $m$.
\end{lemma}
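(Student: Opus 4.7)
The plan is to start from the collisional form of $A[h]$, exploit a cancellation in the projection $\Pi$ to sharpen the quadratic form $v\cdot \Pi(v - v_*)v$, and then reduce to the convolution estimate \Cref{l.polynomial_convolution}. Specifically, from the definition of $A[h]$,
\begin{equation*}
    v\cdot A[h] v = \frac{1}{8\pi} \int \frac{v \cdot \Pi(v - v_*)\, v}{|v - v_*|}\, h(v_*)\dv_*.
\end{equation*}
Since $\Pi(w)$ is the orthogonal projection onto $w^\perp$, it annihilates $w$, so $\Pi(v - v_*)\,v = \Pi(v - v_*)\,v_*$ and hence
\begin{equation*}
    v \cdot \Pi(v - v_*) v = v_* \cdot \Pi(v - v_*) v_* \leq |v_*|^2.
\end{equation*}
Together with the trivial bound $\leq |v|^2$, this gives the sharp pointwise estimate $v\cdot \Pi(v - v_*)v \leq \min(|v|^2,|v_*|^2)$.

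Next, I would interpolate: for any $s \in [0,2]$,
\begin{equation*}
    \min(|v|^2, |v_*|^2) \leq |v|^{2 - s}|v_*|^s \leq \brak{v}^{2 - s}\brak{v_*}^{s},
\end{equation*}
so that, using $h(v_*) \leq \norm{h}_{L^\infty_m}\brak{v_*}^{-m}$,
\begin{equation*}
    v \cdot A[h] v
        \lesssim \norm{h}_{L^\infty_m}\,\brak{v}^{2-s} \int \frac{\dv_*}{|v - v_*|\brak{v_*}^{m - s}}.
\end{equation*}
Applying \Cref{l.polynomial_convolution} with $\alpha = 1$ and weight $m - s$, and choosing $s$ in the first case $m - s \in (2,3)$, the convolution integral is bounded by $\brak{v}^{2 - (m - s)}$, which combined with the $\brak{v}^{2 - s}$ prefactor yields exactly $\brak{v}^{4 - m}$.

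Finally, one checks that the admissibility $m - s \in (2,3)$ can be arranged with $s \in [0,2]$ in each subinterval of $(2,5)$ avoiding the integers: take $s = 0$ for $m \in (2,3)$, $s = 1$ for $m \in (3,4)$, and $s = 2$ for $m \in (4,5)$. The excluded integer values $m \in \{3,4\}$ correspond precisely to the borderline logarithmic case $m - s = 3$ in \Cref{l.polynomial_convolution}, which is why the statement requires $m$ non-integer.

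The main obstacle is conceptual rather than computational: without the cancellation identity $v\cdot \Pi(v-v_*)v = v_* \cdot \Pi(v - v_*)v_*$, the crude bound $v\cdot \Pi(v - v_*) v \leq |v|^2$ loses a factor of $\brak{v}^{m - 3}$ for large $m$ and fails to recover the sharp $\brak{v}^{4 - m}$ decay in the range $m \in (4, 5)$. Once this observation is in hand, the rest is a routine interpolation and an application of the polynomial convolution lemma.
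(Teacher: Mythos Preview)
Your proof is correct and is essentially the same as the paper's: both rely on the cancellation $\Pi(v-v_*)v = \Pi(v-v_*)v_*$ to exchange $v$ for $v_*$ in the quadratic form, then apply \Cref{l.polynomial_convolution} with $\alpha=1$ and weight $m-s$ for $s\in\{0,1,2\}$ in the three subintervals. The only cosmetic difference is that you package the three cases via the interpolation $\min(|v|^2,|v_*|^2)\le |v|^{2-s}|v_*|^s$, whereas the paper phrases it as ``exchanging $v$ for $w$ fewer times.''
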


The assumption that $m$ is a non-integer allows us to avoid log factors as in \Cref{l.polynomial_convolution}.

\begin{proof}
    Fix $m \in (4,5)$.  Using that $\Pi(z)z = 0$, $\Pi(z)$ is symmetric, and \Cref{l.polynomial_convolution} with $\alpha = 1$ yields
    \begin{equation}
    \begin{aligned}
        v\cdot A[h]v
            &= \int_{\R^3} \frac{\left[v \cdot \Pi(v - w)v\right]h(w)}{\abs{v - w}} \dd w
            = \int_{\R^3} \frac{\left[w\cdot \Pi(v - w)w\right]h(w)}{\abs{v - w}} \dd w
            \\&
            \le \int_{\R^3} \frac{\abs{w}^2\|h\|_{L^\infty_m}}{\abs{v - w}\brak{w}^m} \dd w
            \le \|h\|_{L^\infty_m}\int_{\R^3} \frac{1}{\abs{v - w}\brak{w}^{m-2}} \dd w
            \lesssim_m \brak{v}^{4-m}\|h\|_{L^\infty_m}.
    \end{aligned}
    \end{equation}
    The proofs for $m \in (2,3)$ or $(3,4)$ are derived 
    in the same manner, but exchanging $v$ for $w$ fewer times.
\end{proof}

We continue with a second pointwise in velocity estimate on the diffusion coefficient:
\begin{lemma}\label{l.A_Linfty_Lp}
    Fix any $m > 2$ and any $p > \sfrac{3}{2}$. If $h \in L^p_m$, then
    \begin{equation}
        \norm{A[h]}_{L^\infty}
            \lesssim_{m,p} \norm{h}_{L^p_m}.
    \end{equation}
\end{lemma}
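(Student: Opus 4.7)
The plan is to reduce the matrix-valued bound to a scalar convolution estimate and then exploit the polynomial convolution lemma already in hand. Since $\Pi(z) = I - z\otimes z/|z|^2$ is an orthogonal projection, every entry of $\Pi(v-w)$ is bounded by $1$, so
\begin{equation*}
    |A[h](v)| \lesssim \int_{\R^3} \frac{h(w)}{|v-w|}\,dw.
\end{equation*}
Therefore, it suffices to prove that the right-hand side is bounded by $\|h\|_{L^p_m}$ uniformly in $v$.

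Next, I would insert the weight and apply H\"older's inequality with exponents $p$ and $p' = p/(p-1)$:
\begin{equation*}
    \int_{\R^3} \frac{h(w)}{|v-w|}\,dw
    = \int_{\R^3} \frac{\brak{w}^m h(w)}{|v-w|\,\brak{w}^m}\,dw
    \le \|h\|_{L^p_m}\left(\int_{\R^3}\frac{dw}{|v-w|^{p'}\brak{w}^{mp'}}\right)^{1/p'}.
\end{equation*}
The condition $p > 3/2$ forces $p' < 3$, which is exactly the regime in which \Cref{l.polynomial_convolution} (applied with $\alpha = p'$ and weight exponent $mp'$) controls the convolution integral.

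The one thing to verify is that the bound produced by \Cref{l.polynomial_convolution} is actually $v$-independent rather than growing in $v$. The three cases of the lemma give either $\brak{v}^{-p'}$ (when $mp' > 3$), $\brak{v}^{-p'}\log(1+\brak{v})$ (when $mp' = 3$), or $\brak{v}^{3 - p' - mp'}$ (when $3 - p' < mp' < 3$), all bounded in $v$ provided $3 - p' - mp' \le 0$, i.e.\ $m \ge 2 - 3/p$. Since $p > 3/2$ yields $2 - 3/p < 2$, the hypothesis $m > 2$ is more than enough to place us safely in one of these cases.

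I expect no significant obstacle: the main step is a single H\"older estimate, and the index bookkeeping is routine once one has \Cref{l.polynomial_convolution} in hand. The only mildly subtle point is making sure $m > 2$ is enough for \emph{every} admissible $p > 3/2$, which boils down to the observation $2 - 3/p < 2$ noted above.
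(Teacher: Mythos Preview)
Your proof is correct and follows essentially the same route as the paper: reduce the matrix bound to the scalar convolution $\int |v-w|^{-1} h(w)\,dw$, insert the weight $\brak{w}^m$, apply H\"older with exponents $p$ and $p'$, and invoke \Cref{l.polynomial_convolution} with $\alpha = p'$ and weight $mp'$. Your case analysis for the $v$-boundedness is slightly more explicit than the paper's, but the key constraint $3 - p' < mp'$ (equivalently $m > 2 - 3/p$) and its verification from $m>2$ are the same.
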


\begin{proof}
    Applying \Cref{l.polynomial_convolution} with $\alpha = \frac{p}{p-1}$ and $m^* = \frac{pm}{p-1}$, we find
    \begin{equation}
    \begin{aligned}
        \abs{A[h]}
        \le 
        \int_{\R^3} \frac{\brak{w}^{m}h(w)}{\abs{v - w}\brak{w}^m} \dd w &\lesssim \norm{h}_{L^p_m}\left(\int_{\R^3} \frac{1}{\abs{v - w}^{\frac{p}{p-1}}\brak{w}^{m^*}} \dd w\right)^{\frac{p-1}{p}} \lesssim \norm{h}_{L^p_m}
    \end{aligned}
    \end{equation}
    provided $\alpha < 3$ and $3 - \alpha <  m^*$. The first constraint follow because $p>\sfrac32$, while the second constraint amounts to the condition $3 < \frac{p(m+1)}{p-1}$, which holds because $m > 2$. 
\end{proof}

\begin{remark}
    The above lemma is wildly suboptimal. However, this estimate is only applied to terms that are then controlled by qualitative regularity of a solution to the Landau equation. Using a sharper version of the above estimate only enlarges the class of weak solutions for which our {\em a priori} estimates hold.
\end{remark}

\section{Main Computation}\label{sec:main}

In this section, we prove our main {\em a priori} upper bound,  \Cref{prop:upper_bound}. 
The proof is based crucially on the following computation concerning the evolution of weighted $L^2$-norms:
\begin{lemma}\label{lem:stampacchia_estimate}
    Suppose $f\colon[0,T]\times \Omega\times \R^3 \to \R^+$ is a smooth, rapidly decaying solution to the Landau equation. For any non-integer $2 < m < 5$ and any $\ell:[0,T]\to \R$ Lipschitz, define the quantities
    \begin{equation}
        g = \brak{v}^m f \qquad \text{and} \qquad g_\ell = (g - \ell)_+ = \max(g-\ell,0).
    \end{equation}
    Then, $g$ satisfies the following differential inequality:
    \begin{equation}
    \begin{aligned}
        \int g_\ell \partial_t g \dd v \dd x
            &\lesssim_m \ell(t) \norm{f(t)}_{L^\infty_{x,v}} \int g_\ell \dd v \dd x + \left(\norm{f(t)}_{L^\infty_{x,v}} + \norm{\brak{\cdot}^mf(t)}_{L^\infty_x L^2_v}\right)\int g_\ell^2 \dd v \dd x.
    \end{aligned}
    \end{equation}
\end{lemma}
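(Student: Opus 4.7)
The plan is an energy estimate on the equation satisfied by $g = \langle v\rangle^m f$. From the Landau equation, $\partial_t g + v\cdot \nabla_x g = \langle v\rangle^m Q_L(f)$. Testing against $g_\ell=(g-\ell)_+$ and integrating over $\Omega\times\R^3$, the transport piece vanishes via $g_\ell\, v\cdot \nabla_x g = v\cdot \nabla_x(\tfrac12 g_\ell^2)$ and periodicity/decay in $x$. So the problem reduces to bounding $\int g_\ell \langle v\rangle^m Q_L(f)\,\dd v\,\dd x$.

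First, I would use the non-divergence form $Q_L(f) = \Tr(A D^2_v f) + f^2$ and substitute $f = \langle v\rangle^{-m} g$. A direct expansion of $D^2_v(\langle v\rangle^{-m} g)$ gives
\begin{equation*}
    \langle v\rangle^m \Tr(A D^2_v f) = \Tr(A D^2_v g) - 2m\langle v\rangle^{-2}(Av)\cdot \nabla_v g - m\langle v\rangle^{-2} a\, g + m(m+2)\langle v\rangle^{-4}\, v\cdot(Av)\, g.
\end{equation*}
Integration by parts in $v$ then uses the kernel-level identity $\nabla_v \cdot A[f] = \nabla_v a[f]$ and the Poisson relation $-\Delta a[f] = f$. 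On $\{g>\ell\}$ one has $\nabla_v g = \nabla_v g_\ell$ and $g = g_\ell + \ell$, so the output organizes into (i) the nonpositive dissipation $-\int A\nabla g_\ell\cdot \nabla g_\ell$, which I discard; (ii) a quadratic slot whose $g_\ell^2$ coefficient is $\tfrac12 f + m\langle v\rangle^{-2} v\cdot \nabla a + m^2\langle v\rangle^{-4} v\cdot Av$; and (iii) a linear slot whose $\ell g_\ell$ coefficient is $f - m\langle v\rangle^{-2} a + m(m+2)\langle v\rangle^{-4} v\cdot Av$. The $\nabla a$ commutators at the linear level cancel exactly between the two integrations by parts.

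For the quadratic slot I would use \Cref{l.A_Linfty_Lp} at $p=2$ to get $\|A[f]\|_\infty + \|a[f]\|_\infty \lesssim_m \|\langle v\rangle^m f\|_{L^\infty_x L^2_v}$, and a standard dyadic split of $|\nabla a(v)| \leq (4\pi)^{-1}\int f(v_*)/|v-v_*|^2\,\dd v_*$, balancing the near-singular against the far-field contributions, yielding $\|\nabla a[f]\|_\infty \lesssim \|f\|_\infty + \|\langle v\rangle^m f\|_{L^\infty_x L^2_v}$ (using the embedding $L^2_m \hookrightarrow L^1_v$ when $m>3/2$). Together with the $\tfrac12 \|f\|_\infty$ produced directly from the $f g_\ell^2$ contribution, these bounds match the $(\|f\|_\infty + \|\langle v\rangle^m f\|_{L^\infty_x L^2_v})\int g_\ell^2$ on the right-hand side of the statement.

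The main obstacle, and the reason behind the restricted range $m\in(2,5)$ together with non-integrality, is the $m(m+2)\ell\int \langle v\rangle^{-4} v\cdot (Av)\, g_\ell$ piece of the linear slot: a naive $\|A\|_\infty$ bound would again pick up the factor $\|\langle v\rangle^m f\|_{L^\infty_x L^2_v}$, which is not allowed in the lemma's $\ell\int g_\ell$ coefficient. To salvage this I expect to combine it with the sign-good $-m\ell\int \langle v\rangle^{-2} a\, g_\ell$ and invoke the sharper weighted bound $v\cdot A[f]v \lesssim_m \langle v\rangle^{4-m}\|\langle v\rangle^m f\|_\infty$ from \Cref{l.A_Linfty_m}; the constraint $m<5$ renders the resulting weight integrable against $g_\ell$, while the non-integrality of $m$ avoids the logarithmic case of \Cref{l.polynomial_convolution}. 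That is the step I expect to be the most delicate.
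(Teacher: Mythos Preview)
Your energy-estimate setup and the term-by-term identification are correct and match the paper's computation; the paper also arrives at exactly the integrand $g_\ell g\,\langle v\rangle^{-4}\,v\cdot A[f]v$ as the principal term to be controlled. The gap is in how you propose to handle this principal term. You split the \emph{outer} factor $g = g_\ell + \ell$ and then, on the resulting linear slot $m(m+2)\,\ell\!\int g_\ell\,\langle v\rangle^{-4}\,v\cdot A[f]v$, invoke \Cref{l.A_Linfty_m} to get $v\cdot A[f]v \lesssim \langle v\rangle^{4-m}\|f\|_{L^\infty_m}$. This produces $\ell\,\|f\|_{L^\infty_m}\!\int g_\ell\,\langle v\rangle^{-m}$, which carries the wrong coefficient: you need $\ell\,\|f\|_{L^\infty_{x,v}}$, not $\ell\,\|f\|_{L^\infty_m}$. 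The proposed cancellation with the sign-good $-m\ell\!\int \langle v\rangle^{-2}a\,g_\ell$ cannot repair this, since $\langle v\rangle^{-4}v\cdot Av\le \langle v\rangle^{-2}a$ only yields $m(m+1)\ell\!\int \langle v\rangle^{-2}a\,g_\ell$, and $a[f]$ admits no bound by $\|f\|_{L^\infty}$ alone. Your argument therefore proves only a weaker inequality with $\|f\|_{L^\infty_m}$ in the linear slot, which is circular for the application to \Cref{prop:upper_bound}.

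The paper's device is to \emph{not} split the outer $g$, but rather to split $f$ \emph{inside} the operator $A[\cdot]$: write $f\le \langle v\rangle^{-m}g_\ell + \ell\langle v\rangle^{-m}$, so that $v\cdot A[f]v \le v\cdot A[\langle v\rangle^{-m}g_\ell]v + \ell\,v\cdot A[\langle v\rangle^{-m}]v$. Now \Cref{l.A_Linfty_m} is applied to the fixed function $\langle v\rangle^{-m}$, giving $v\cdot A[\langle v\rangle^{-m}]v\lesssim \langle v\rangle^{4-m}$ with a universal constant and no $\|f\|_{L^\infty_m}$. Keeping the outer factor as $g_\ell g = g_\ell f\langle v\rangle^m$, this second piece becomes exactly $\ell\!\int g_\ell f \le \ell\,\|f\|_{L^\infty}\!\int g_\ell$. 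The first piece is handled by \Cref{l.A_Linfty_Lp} with $p=2$ and a Cauchy--Schwarz in $v$, landing in the $\|\langle\cdot\rangle^m f\|_{L^\infty_xL^2_v}\!\int g_\ell^2$ slot; this is the content of \Cref{l.g_ell_f_vAv}. In short, the $\ell$ must be extracted from inside $A[f]$, not from the outer $g$.
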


We first deduce \Cref{prop:upper_bound} from \Cref{lem:stampacchia_estimate}. Then, we spend the remainder of the section deriving the differential inequality contained in \Cref{lem:stampacchia_estimate}.

\subsection{Proof of \Cref{prop:upper_bound}}

Fix any non-integer $m \in (2,5)$. Choose $\ell(t)$ as 
\begin{equation}
    \ell(t) = \exp\left(K \int_0^t \norm{f(s)}_{L^\infty} \dd s\right)\norm{f_{\rm in}}_{L^\infty_m},
\end{equation} where $K$ is the corresponding implicit constant from \Cref{lem:stampacchia_estimate}. Consequently, $\ell$ satisfies the differential equation
\begin{equation*}
    \ell^\prime(t) = K\norm{f(t)}_{L^\infty}\ell(t) \qquad \text{and} \qquad \ell(0) = \norm{f_{\rm in}}_{L^\infty_m}.
\end{equation*}
Applying \Cref{lem:stampacchia_estimate} to $f$ and using the choice of $\ell$, we find
\begin{equation*}
\begin{split}
    \frac{1}{2} \frac{\dd}{\dd t} \int g_\ell^2 \dd v \dd x &= \int g_\ell \partial_t g \dd v \dd x - \ell^{\prime}(t)\int g_\ell \dd v \dd x\\
        &
            \leq K\left[\norm{f(t)}_{L^\infty_{x,v}} + \norm{\brak{\cdot}^mf(t)}_{L^\infty_x L^2_v}\right] \int g_\ell^2 \dd v \dd x.
\end{split}
\end{equation*}
Setting $y(t) = \int g_\ell^2 \dd v \dd x$, $y(t)$ satisfies the differential inequality
\begin{equation*}
    y^\prime(t) \le 2K\left[\norm{f(t)}_{L^\infty_{x,v}} + \norm{\brak{\cdot}^mf(t)}_{L^\infty_x L^2_v}\right]\;y(t), \qquad \text{and} \qquad y(0) = 0.
\end{equation*}
Because $f$ is qualitatively smooth and rapidly decaying, we may apply Gr\"onwall's inequality to conclude that $y(t) = 0$ for each $t \in [0,T]$. Consequently, $g_\ell(t,x,v)$ is identically $0$. After unraveling definitions, this implies the desired pointwise bound
\begin{equation*}
    f(t,x,v) \le \frac{\ell(t)}{\brak{v}^m} = \frac{\norm{f_{\rm in}}_{L^\infty_m}}{\brak{v}^m} \exp\left(K\int_0^t \norm{f(s)}_{L^\infty} \dd s \right),
\end{equation*}
which concludes the proof.

\subsection{Bound on the Principal Term}

In deriving \Cref{lem:stampacchia_estimate}, we encounter one problematic term frequently. We bound this term now using the weighted coefficient bounds from \Cref{l.A_Linfty_m} and \Cref{l.A_Linfty_Lp}: 
\begin{lemma}\label{l.g_ell_f_vAv}
    Under the assumptions of \Cref{lem:stampacchia_estimate}, there holds
    \begin{equation}
    \begin{aligned}
        \int g_\ell f\brak{v}^m \left(\frac{v}{\vv^2}\cdot A[f]\frac{v}{\vv^2}\right) \dd v \dd x
        \lesssim_m
            \ell \int f g_\ell \dd v \dd x
            + \norm{\brak{\cdot}^{m-2}f}_{L^\infty_x L^2_v} \int g_\ell^2 \dd v \dd x.
    \end{aligned}
    \end{equation}
\end{lemma}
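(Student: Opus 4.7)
The plan is to decompose $f = f_{\rm sm} + f_{\rm bg}$, where $f_{\rm sm} \coloneqq \min(f,\, \ell\brak{v}^{-m})$ and $f_{\rm bg} \coloneqq g_\ell \brak{v}^{-m} = (f - \ell \brak{v}^{-m})_+$, both non-negative.  Because $A[\cdot]$ is linear and yields a positive semi-definite matrix on non-negative inputs, the quadratic form splits as $v \cdot A[f] v = v \cdot A[f_{\rm sm}] v + v \cdot A[f_{\rm bg}] v$, with both summands non-negative.  The two resulting contributions will be engineered to produce exactly the two terms on the right-hand side: the small piece will give the $\ell$-proportional term, and the big piece the $L^2$-quadratic term.

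For the small piece, by construction $\|f_{\rm sm}\|_{L^\infty_m} \le \ell$, so \Cref{l.A_Linfty_m} applied directly to $f_{\rm sm}$ yields $v \cdot A[f_{\rm sm}] v \lesssim_m \ell \brak{v}^{4-m}$.  Dividing by $\brak{v}^4$ and multiplying by $g_\ell f \brak{v}^m$, all of the $\brak{v}$-weights cancel and the integral reduces cleanly to $\ell \int g_\ell f \dd v \dd x$, matching the first term without any further work.

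The substance of the proof lies in the big piece.  The key estimate I would establish is the pointwise-in-$v$ bound $\|A[f_{\rm bg}](v)\| \lesssim \|g_\ell(t,x,\cdot)\|_{L^2_v}/\brak{v}$.  This follows from $\|\Pi\| \le 1$, writing $f_{\rm bg}(w) = g_\ell(w)/\brak{w}^m$, and Cauchy--Schwarz in $w$, leaving the convolution $\int \brak{w}^{-2m}|v-w|^{-2}\dd w$, which \Cref{l.polynomial_convolution} (applied with $\alpha = 2$ and weight $2m > 3$, valid since $m > 2$) controls by $\brak{v}^{-2}$.  Combined with the trivial inequality $v \cdot A[f_{\rm bg}] v \le |v|^2\|A[f_{\rm bg}](v)\|$, this gives $v \cdot A[f_{\rm bg}] v / \brak{v}^4 \lesssim \|g_\ell\|_{L^2_v}/\brak{v}^3$.

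Inserting this bound into the big-piece integral and regrouping $g_\ell f \brak{v}^{m-3} = (g_\ell/\brak{v})(\brak{v}^{m-2} f)$, a Cauchy--Schwarz in $v$ gives
\[
    \int g_\ell f \brak{v}^{m-3} \dd v \le \|g_\ell/\brak{v}\|_{L^2_v} \|\brak{v}^{m-2} f\|_{L^2_v} \le \|g_\ell(t,x,\cdot)\|_{L^2_v}\|\brak{v}^{m-2} f(t,x,\cdot)\|_{L^2_v},
\]
so that integrating in $x$ and pulling out $\|\brak{\cdot}^{m-2} f\|_{L^\infty_x L^2_v}$ leaves precisely $\int g_\ell^2 \dd v \dd x$, producing the second term.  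I expect the principal subtlety of the argument to be this final weight-redistribution: the $\brak{v}$-weights arising from the $L^2$-estimate on $A[f_{\rm bg}]$ must be split between $g_\ell$ and $f$ in exactly the right way so that $\|g_\ell\|_{L^2_v}$ ends up squared (matching the $g_\ell^2$ integral demanded by the Stampacchia setup) rather than collapsing onto an $L^2$-norm of $f$ with a heavier weight than $\brak{\cdot}^{m-2}$.
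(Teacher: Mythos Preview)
Your proposal is correct and follows essentially the same approach as the paper: the same splitting $f = \min(f,\ell\vv^{-m}) + \vv^{-m}g_\ell$, \Cref{l.A_Linfty_m} for the small piece, and an $L^2$-based bound on $A[\vv^{-m}g_\ell]$ for the big piece, closed by the identical H\"older-in-$v$, sup-in-$x$ argument. The only cosmetic difference is that for the big piece you derive $|A[f_{\rm bg}](v)| \lesssim \|g_\ell\|_{L^2_v}/\vv$ directly from Cauchy--Schwarz and \Cref{l.polynomial_convolution}, gaining an extra $\vv^{-1}$ which you then discard, whereas the paper simply invokes \Cref{l.A_Linfty_Lp} with $p=2$ to get $|A[f_{\rm bg}]| \lesssim \|g_\ell\|_{L^2_v}$ without decay; both routes arrive at the same final bound.
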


\begin{proof}
    First, using $g = \brak{v}^m f$ and $g_\ell = (g - \ell)_+$, we decompose $f$ as 
    \begin{equation}\label{eq:g_ell_ellv}
        \begin{split}
            f
            &= \vv^{-m} g_\ell + \min(f,\ell \brak{v}^{-m})
            \leq \vv^{-m} g_\ell + \ell \brak{v}^{-m}.
        \end{split}
    \end{equation}
    Combining \eqref{eq:g_ell_ellv} with \Cref{l.A_Linfty_m} and \Cref{l.A_Linfty_Lp} with $p = 2$, we obtain the pointwise bound
    \begin{equation}
    \begin{aligned}
        v\cdot A[f]v 
        &
        \le v\cdot A[\brak{v}^{-m}g_\ell]v
        + \ell \left(v\cdot A[\brak{v}^{-m}]v\right)
        \lesssim \brak{v}^2\norm{g_\ell}_{L^2_v} + \ell \brak{v}^{4-m}.
    \end{aligned}
    \end{equation}
    Consequently, integrating in both $v$ and $x$, we obtain
    \begin{equation}
    \begin{aligned}
        \iint g_\ell f\brak{v}^{m-4}& \left(v\cdot A[h]v \right) \dd v \dd x
        \lesssim \int\norm{g_\ell}_{L^2_v}\int  \brak{v}^{m-2} g_\ell f \dd v \dd x
        + \ell \int f g_\ell \dd v \dd x.
    \end{aligned}
    \end{equation}
    To conclude, we bound the first integral using H\"older's inequality in $v$ and $x$:
    \begin{equation}
    \begin{split}
        \int \norm{g_\ell}_{L^2_v}\int \brak{v}^{m-2} g_\ell f \dd v \dd x
        &\lesssim \int\norm{f \vvd^{m-2}}_{L^2_v}\norm{g_\ell}_{L^2_{v}}^2 \dd x \le \norm{f \vvd^{m-2}}_{L^\infty_xL^2_v}\norm{g_\ell}_{L^2_{x,v}}^2.
    \end{split}
    \end{equation}
\end{proof}

\subsection{Proof of \Cref{lem:stampacchia_estimate}}

We now derive the key differential inequality in \Cref{lem:stampacchia_estimate} using $L^2$ energy estimates. Throughout this section, we fix $f:[0,T]\times \Omega \times \R^3 \to \R^+$ a smooth, rapidly decaying solution to the Landau equation, $m \in (2,5)$, and $\ell:[0,T]\to \R^+$ Lipschitz.
Associated to these choices, we define the following quantities appearing in \Cref{lem:stampacchia_estimate}:
\begin{equation}
    g = \vv^m f
    \quad\text{ and }\quad
    g_\ell = (g - \ell)_+.
\end{equation}
Our first step is to compute the equation satisfied by $g$:
\begin{equation}\label{eq:g_Landau}
\begin{aligned}
    (\partial_t + v\cdot \nabla_x) g
        &= \brak{v}^m \nabla_v \cdot \left(A[f]\nabla_v \frac{g}{\brak{v}^m} - \frac{g}{\brak{v}^m}\nabla a[f]\right)\\
        &=
        \brak{v}^m\nabla_v \cdot \left(\frac{1}{\brak{v}^{m}}A[f]\nabla_v g - \frac{m}{\brak{v}^{m}} gA[f]\frac{v}{\brak{v}^{2}}\right)
        - \nabla_v \cdot \left(g\nabla a[f]\right) + mg \left(\frac{v}{\brak{v}^2} \cdot \nabla a[f]\right)\\
        &= \nabla_v \cdot \left(A[f]\nabla_v g - g\nabla a[f]\right)  - m g\brak{v}^{-2}\Tr\left(A[f]\right)\\
        &\qquad - 2m\left(\frac{v}{\brak{v}^{2}} \cdot A[f]\nabla_v g\right) + m(m+2)g\left(\frac{v}{\brak{v}^2}\cdot A[f]\frac{v}{\brak{v}^2}\right).
\end{aligned}
\end{equation}
To perform an $L^2_{x,v}$ energy estimate, we multiply the above equation by $g_{\ell}$ and integrate in both $x$ and $v$. The free transport is a pure divergence and vanishes.  Additionally, notice that $\nabla_v g = \nabla_v g_\ell$ on the support of $g_\ell$. Hence, after rearranging terms, we find:
\begin{equation}\label{eq:RHS_A_RHS_a}
\begin{aligned}
    \int g_\ell\partial_t g \dd v \dd x
    &= - \int \nabla_v g_\ell \cdot A[f]\nabla_v g_\ell \dd v \dd x
    -2m \int g_\ell \nabla g_\ell \cdot A[f]\frac{v}{\brak{v}^2} \dd v \dd x
    \\
    &\qquad - m\int \vv^{-2} g_\ell g \Tr(A[f]) \dd v \dd x
    + m(m+2)\int g_\ell g\left(\frac{v}{\brak{v}^2}\cdot A[f]\frac{v}{\brak{v}^2}\right) \dd v \dd x
    \\
    &\qquad
    + \int  g \nabla_v g_\ell \cdot \nabla a[f]\dd v\dd x
    := \sum_{j=1}^5 I_j.
\end{aligned}
\end{equation}
Note that $I_1$ and $I_3$ have a good sign since $A[f]$ is a positive definite, symmetric matrix. For $I_2$, we use the Cauchy-Schwarz inequality in the form, for any $\delta > 0$,
\begin{equation*}
    I_2 \le  m \delta \int \nabla g_\ell \cdot A[f]\nabla g_\ell \dd v \dd x + \frac{m}{\delta}\int g_\ell^2 \left(\frac{v}{\brak{v}^2} \cdot A[f]\frac{v}{\brak{v}^2} \right) \dd v \dd x.
\end{equation*}
Picking $\delta = \frac{1}{m}$ and using the inequality $g_\ell \le g$, the first four terms are bounded as
\begin{equation*}
    \sum_{j=1}^4 I_j \le \left[m(m+2) + m^2 \right] \int g_\ell g \left(\frac{v}{\brak{v}^2} \cdot A[f]\frac{v}{\brak{v}^2} \right) \dd v \dd x.
\end{equation*}
For the final term involving $\nabla a$, we use the identity $g = g_\ell + \min(g,\ell)$, integrate by parts, and use $\Delta a[f] = -f$:
\begin{equation*}
\begin{aligned}
    I_5 &= \int g_\ell \nabla_v g_\ell \cdot \nabla a[f] \dd v \dd x + \ell\int \nabla_v g_\ell \cdot \nabla a[f] \dd v \dd x\\
        &= \frac{1}{2}\int g_\ell^2 f \dd v \dd x + \ell\int g_\ell f\dd v \dd x.
\end{aligned}
\end{equation*}
Combined with \Cref{l.g_ell_f_vAv}, we conclude that
\begin{equation*}
    \int g_\ell \partial_t g \lesssim \left(\norm{\brak{\cdot}^mf(t)}_{L^\infty_xL^2_v} + \norm{f(t)}_{L^\infty_{x,v}}\right)\int g_\ell^2 \dd v \dd x + \ell\norm{f(t)}_{L^\infty_{x,v}}\int g_\ell \dd v \dd x,
\end{equation*}
as desired.

\section{Continuation in a larger class: \Cref{thm:existence}}\label{sec:existence}

\begin{proof}[Proof of \Cref{thm:existence}]
The proof of this theorem is similar to the results in, e.g., \cite{HendersonSnelsonTarfulea, HST_Boltzmann_existence, SnelsonTaylor}.  The overview of the proof is to (1) regularize the initial data in order to apply previous existence results; (2) obtain a ``nice'' bound, uniform in the approximation, that holds on an $O(1)$ length time interval; (3) apply the continuation criteria with the bound from step (2) to extend the solution for the entirety of the $O(1)$ length time interval; (4) take the limit to remove the regularization parameter with the available smoothing results for the Landau equation.  As such, we merely sketch the proof.

\begin{flushleft}
    \textbf{Step 1: Uniform Bounds for an Approximate Solution}
\end{flushleft}

Add a small parameter $\eps\in(0,1)$ quantifying a regularization and localization of the initial data to obtain a family $f_{{\rm in},\eps}$ of non-negative Schwartz class initial data verifying
\begin{equation}\label{eq:Linfty_m_mollified}
    \|f_{{\rm in},\eps}\|_{L^\infty_m}
        \leq 2\|f_{\rm in}\|_{L^\infty_m}, \qquad \text{and} \qquad f_{{\rm in},\eps} \to f_{\rm in} \text{ strongly in $L^1_{\rm loc}$}.
\end{equation}
Applying \Cref{thm:chris_stan_andrei}, there is a $T_{\eps}$ and a unique Schwartz class solution $f_{\eps}\colon [0,T_{\eps}]\times \Omega \times \R^3 \to \R^+$ to the Landau equation with initial datum $f_{{\rm in},\eps}$.
Now, the {\em a priori} bound in  \Cref{prop:upper_bound} can be applied to $f_{\eps}$ with ~\eqref{eq:Linfty_m_mollified} to conclude: for every $t \in [0,T_{\eps}]$,
\begin{equation}
\norm{f_{\eps}(t)}_{L^\infty_m}
    \le 2\norm{f_{\rm in}}_{L^\infty_m} \exp\Big(K\int_0^t 
        \norm{f_{\eps}(s)}_{L^\infty} \dd s\Big).
\end{equation}
Setting $T_0 = \frac{\log(2)}{2K\norm{f_{\rm in}}_{L^\infty_m}}$, it is easy to check that
\begin{equation}\label{eq:bound}
\|f_{\eps}(t)\|_{L^\infty_m}
        \le 4\|f_{\rm in}\|_{L^\infty_m}, \qquad \text{for\;\;}t \in [0,\min\{T_0,T_\eps\}].
\end{equation}
Applying \Cref{thm:main}, the maximal time interval of existence can be extended beyond $T_\eps$.  In fact, we see that a solution exists at least on $[0,T_0]$.  Note that $T_0$ does not depend on $\eps$.

\begin{flushleft}
    \textbf{Step 2: Identifying the Limit and Regularity Estimates}
\end{flushleft}

We begin by identifying $f:[0,T_0]\times \Omega \times \R^3 \to \R$, the weak star limit of (a subsequence of) the family $f_{\eps}$ as $\eps \to 0$ in $L^\infty(0,T_0; L^\infty_m)$. 
Consequently, $f$ is non-negative and verifies the {\em a priori} estimate \eqref{eq:bound} with $\eps = 0$.

Arguing as in \cite[Proof of Theorem~1.2]{HendersonSnelsonTarfulea}, we obtain local H\"older regularity and $C^2_{\rm kin, loc}$ bounds.  Roughly, the first step of the argument is to use mass-spreading to obtain uniform-in-$\eps$ ellipticity for $A[f_{\eps}]$ from the uniform-in-$\eps$ upper bounds on $A[f_{\eps}]$ provided by \eqref{eq:bound}. 
Second, one uses the De Giorgi result of~\cite{GolseImbertMouhotVasseur} and a clever change of variables due to~\cite{CameronSilvestreSnelson} to obtain uniform-in-$v$ H\"older regularity of $f_{\eps}$. Third, it follows that $A[f_{\eps}]$ is H\"older regular, and the Schauder estimates of, e.g.,~\cite{HendersonSnelson, ImbertMouhot} can be applied: for any compact $\tilde \Omega \subset (0,T]\times \Omega\times \R^3$, one obtains $f_{\eps} \in C_{\rm kin}^{2,\alpha}(\tilde \Omega)$, uniformly in $\eps$, but where the value of $\alpha$ may depend on $\tilde \Omega$.  Regardless, this is sufficient to take the limit $\eps \to 0$ and obtain a classical solution $f \in C^2_{\rm kin, loc}$ of~\eqref{eq:landau}.

The skeptical reader might note that \cite[Theorem~1.2]{HendersonSnelsonTarfulea} is stated only for $m > \sfrac{15}2$ and with a stronger mass core lower bound.  These are bottlenecks due to the application, in the proof of \cite[Theorem~1.2]{HendersonSnelsonTarfulea}, of an earlier continuation result that we do not require here.  An inspection of the proof in~\cite{HendersonSnelsonTarfulea} reveals $m>5$ is sufficient for the parts of the proof recalled in the previous paragraph.

\begin{flushleft}
    \textbf{Step 3: Behavior at Initial Time}
\end{flushleft}

To verify that $f$ obtains the initial datum strongly, we require estimates that do not degenerate at $t = 0$. Fix $\widetilde\Omega \subset \Omega \times \R^3$ a compact set. Then, certainly $f_{\eps} \in L^\infty(0,T;L^2_{x,v}(\widetilde\Omega))$ uniformly in $\eps$ and by a standard duality argument, using the uniform-in-$\eps$ bounds on $\norm{f_{\eps}}_{L^\infty}$, $\norm{A[f_\eps]}_{L^\infty}$, and $\norm{\nabla a[f_\eps]}_{L^\infty}$ provided by \eqref{eq:bound},
\begin{equation*}
    \partial_t f_{\eps}  = -v\cdot \nabla_x f_{\eps} + \nabla_v^2 :\left(A[f_{\eps}]f_{\eps}\right) + 2\nabla_v \cdot \left(\nabla_v a[f_{\eps}]f_{\eps}\right) \in L^\infty(0,T; W^{-2,\infty}(\widetilde\Omega)).
\end{equation*}
The Aubin-Lions Lemma then guarantees $f_{\eps} \in L^\infty(0,T;H^{-1}(\widetilde\Omega))$ uniformly in $\eps$ and $f$ is continuous in the sense of distributions and consequently weakly continuous in $L^2_{x,v}(\widetilde\Omega)$ with the correct initial datum. 
On the other hand, an $L^2$ energy estimate using a cutoff $\varphi \in C^\infty_c(\widetilde \Omega)$ yields
\begin{equation*}
\begin{aligned}
    \int \varphi^2 f_\eps^2(t) \dd v \dd x
        + &2\int_0^t\int \varphi^2\nabla f_\eps \cdot A[f_\eps]\nabla f_\eps \dd v \dd x \dd s
        \\&
        =
        \int \varphi^2 f_{{\rm in},\eps} \dd v \dd x
        - 4\int_0^t\int \varphi f_\eps\nabla \varphi \cdot A[f_\eps]\nabla f_\eps \dd v \dd x \dd s
        \\&\qquad
        +\int_0^t\int \varphi^2f_{\eps}^3 \dd v\dd x \dd s
        + 4\int_0^t\int\varphi f^2_\eps\nabla \varphi \cdot  \nabla a[f] \dd v \dd x \dd s.
\end{aligned}
\end{equation*}
Applying Cauchy-Schwarz and using the uniform-in-$\eps$ bounds on $\norm{f_{\eps}}_{L^\infty},\;\norm{A[f_{\eps}]}_{L^\infty},\text{ and }\norm{\nabla a[f_{\eps}]}_{L^\infty}$, we obtain
\begin{equation*}
    \int \varphi^2 f_\eps^2(t) \dd v \dd x + \int_0^t\int \varphi^2\nabla f_\eps \cdot A[f_\eps]\nabla f_\eps \dd v \dd x \dd t \le \int \varphi^2 f_{\rm in}^\eps \dd v \dd x + C\,t.
\end{equation*}
We send $\eps \to 0^+$ first using the strong convergence \eqref{eq:Linfty_m_mollified} and then sending $t\to 0^+$ to conclude
\begin{equation*}
    \limsup_{t \to 0^+} \int \varphi^2 f^2(t) \dd v \dd x
        \le \int \varphi^2 f^2_{\rm in}\dd v \dd x.
\end{equation*}
We conclude that $f^2(t) \to f_{\mathrm{\rm in}}^2$ in the sense of distributions, which implies $f^2(t) \to f_{\rm in}^2$ weakly in $L^1(\widetilde\Omega)$, since $\set{f^2(t)}_{t>0}$ is uniformly integrable by the $L^\infty$-bound. Combined with $f(t) \to f_{\rm in}$ weakly in $L^2(\widetilde\Omega)$, $f(t) \to f_{\rm in}$ strongly in $L^2(\widetilde\Omega)$.

\begin{flushleft}
    \textbf{Step 4: Continuation}
\end{flushleft}

This follows by showing that, if $f$ solves~\eqref{eq:landau} on $[0,\tilde T]\times \Omega \times \R^3$, for some $\tilde T>0$, and 
\be\label{e.c110101}
    \int_0^{\tilde T} \|f(t)\|_{L^\infty_{x,v}} \dd t < \infty,
\ee
then $f$ can be extended to $[0, \tilde T+\delta]$, where $\delta>0$ depends only on the left hand side of~\eqref{e.c110101} and $\|f_{\rm in}\|_{L^\infty_m}$.  Indeed, from~\eqref{e.c110101} and \Cref{prop:upper_bound}, we see that $f(\tilde T) \in L^\infty_m$.  Moreover, the mass spreading result discussed in Step 2, ensures that the mass core assumption~\eqref{eq:mass_core} is satisfied at time $\tilde T$.  Applying the already-proved existence portion of the theorem with $f(\tilde T)$ as the initial data yields a solution on $[0,\tilde T+\delta]$.

One needs only verify that $f$ remains a classical solution across the time $t=\tilde T$.  It is straightforward to check that $f$  is a weak solution on $[0,\tilde T + \delta]\times \Omega\times \R^3$.  Hence, the De Giorgi estimates of~\cite{GolseImbertMouhotVasseur} and Schauder theory of~\cite{HendersonSnelson} apply and yield the $C^2_{\rm kin, loc}$ regularity of $f$ on $(0,\tilde T + \delta]\times \Omega \times \R^3$.  This concludes the proof.
\end{proof}

\bibliographystyle{abbrv}
\bibliography{LandauRefs}

\end{document}